\theoremstyle{theorem}
\newtheorem{thm}{Theorem}
\newtheorem{dfn}[thm]{Definition}
\newtheorem{crl}[thm]{Corollary}
\newtheorem{lem}[thm]{Lemma}
\crefname{equation}{}{}
\Crefname{equation}{Equation}{Equations}
\crefname{figure}{Fig.}{Figs.}
\Crefname{figure}{Figure}{Figures}
\crefname{table}{Table}{Tables}
\Crefname{algoritheorem}{Algoritheorem}{Algoritheorems}
\crefname{section}{Sect.}{Sects.}
\Crefname{section}{Section}{Sections}
\crefname{thm}{Theorem}{Theorems}
\Crefname{thm}{Theorem}{Theorems}
\crefname{corollary}{Corollary}{Corollaries}
\Crefname{corollary}{Corollary}{Corollaries}
\crefname{lemma}{Lemma}{Lemma}
\Crefname{lemma}{Lemma}{Lemma}
\crefname{dfn}{Definition}{Definitions}
\Crefname{dfn}{Definition}{Definitions}
\crefname{remark}{Remark}{Remarks}
\Crefname{remark}{Remark}{Remarks}
\newcommand{\sepfigure}{}
\newcommand{\R}{\mathbb R}
\newcommand{\Z}{\mathbb Z}
\newcommand{\al}{\alpha}
\newcommand{\ga}{\gamma}
\newcommand{\Ga}{\Gamma}
\newcommand{\la}{\lambda}
\newcommand{\om}{\omega}
\newcommand{\bd}{\partial}
\newcommand{\at}{\mathrm{AT}}
\newcommand{\cs}{\mathrm{CS}}
\newcommand{\pr}{\mathrm{pr}}
\newcommand{\area}{\mathrm{area}}
\newcommand{\lk}{\mathrm{lk}}
\newcommand{\sign}{\mathrm{sign}}
\newcommand{\ora}{\overrightarrow}
\newcommand{\bs}{\hfill $\blacksquare$}
\newcommand{\vect}[2]{ \left( \begin{array}{c} 
 #1 \\ #2 \end{array} \right)}
\newcommand{\mat}[4]{ \left( \begin{array}{cc} 
 #1 & #2 \\ #3 & #4 \end{array} \right)}
\begin{document}

\title*{A formula for the linking number in terms of isometry invariants of straight line segments}
\titlerunning{A formula for the linking number in terms of isometry invariants}
\author{Matt Bright, Olga Anosova, Vitaliy Kurlin \newline 
Computer Science and Materials Innovation Factory, University of Liverpool \newline
sgmbrigh@liverpool.ac.uk,oanosova@liv.ac.uk, vitaliy.kurlin@gmail.com}
 \authorrunning{Bright, M., Anosova, O., Kurlin V.} 
%
%
\maketitle

\abstract{In 1833 Gauss defined the \emph{linking number} of two disjoint curves in 3-space. For open curves this double integral over the parameterised curves is real-valued and invariant modulo rigid motions or isometries that preserve distances between points, and has been recently used in the elucidation of molecular structures. In 1976 Banchoff geometrically interpreted the linking number between two line segments. An explicit analytic formula based on this interpretation was given in 2000 without proof in terms of six isometry invariants: the distance and angle between the segments and four coordinates specifying their relative positions. We give a detailed proof of this formula and describe its asymptotic behaviour that wasn't previously studied.}

\section{The Gauss integral for the linking number of curves}
\label{sec:intro}

This extended version of the conference paper \cite{bright2020proof} includes all previously skipped proofs.
For any vectors $\vec u,\vec v, \vec w\in\R^3$, the \emph{triple} product is $(\vec u,\vec v,\vec w)=(\vec u\times \vec v)\cdot\vec w$.
  
\begin{dfn}[Gauss integral for the linking number]
\label{definition:Gauss_integral}
For piecewise-smooth curves $\ga_1,\ga_2:[0,1]\to\R^3$, the \emph{linking number} can be defined as the Gauss integral \cite{gauss1833integral}
$$\lk(\ga_1,\ga_2)=\dfrac{1}{4\pi}\int\limits_0^1\int\limits_0^1 \dfrac{(\dot{\ga}_1(t),\dot{\ga}_2(s),\ga_1(t)-\ga_2(s))}{|\ga_1(t)-\ga_2(s)|^3}dt ds,
\leqno{(\ref{definition:Gauss_integral})}$$
where $\dot{\ga}_1(t),\dot{\ga}_2(s)$ are the vector derivatives of the 1-variable functions $\ga_1(t),\ga_2(s)$.
\end{dfn}

The formula in \Cref{definition:Gauss_integral} gives an integer number for any closed disjoint curves $\ga_1,\ga_2$ due to its interpretation as the \emph{degree} of the \emph{Gauss map} $\Ga(t,s)=\dfrac{\ga_1(t)-\ga_2(s)}{|\ga_1(t)-\ga_2(s)|}:S^1\times S^1\to S^2$, i.e.
$\deg\Ga=\dfrac{\area(\Ga(S^1\times S^1))}{\area(S^2)}$, where the area of the unit sphere is $\area(S^2)=4\pi$.
This integer degree is  the \emph{linking number} of the 2-component link $\ga_1\sqcup\ga_2\subset\R^3$ formed by the two closed curves.
Invariance modulo continuous deformation of $\R^3$  follows easily for closed curves - indeed, the function under the Gauss integral in (\ref{definition:Gauss_integral}), and hence the integral itself, varies continuously under perturbations of the curves $\ga_1,\ga_2$. This should keep any integer value constant.
\medskip

For open curves $\ga_1,\ga_2$, the Gauss integral gives a real but not necessarily integral value, which remains invariant under rigid motions or orientation-preserving isometries (see~\Cref{theorem:lk_invariance}).
In $\R^3$ with the Euclidean metric isometries consist of rotations, translations and reflections. 
Isometry invariance of the real-valued linking number for open curves has found applications in the study of molecules \cite{ahmad2020characterization}.
\medskip

Any smooth curve can be well-approximated by a polygonal line, so the computation of the linking number reduces to a sum over pairs of straight line segments $L_1, L_2$. 
In 1976 Banchoff \cite{banchoff1976self} has expressed the linking number $\lk(L_1,L_2)$ in terms of the endpoints of each segment, see details of this and other past work in~\Cref{sec:past}.
\medskip

In 2000 Klenin and Langowski \cite{klenin2000computation} proposed a formula  for the linking number $\lk(L_1,L_2)$ of two straight line segments in terms of six isometry invariants of $L_1,L_2$, referring to a previous paper~\cite{Vologodskii1974}, which used the formula without any detailed proof. 
The paper~\cite{klenin2000computation} also skipped all details of the invariant-based formula's derivation. 
\medskip

The usefulness of the invariant-based formula can be seen by considering the analogy with the simpler concept of the scalar (dot) product of vectors.
The algebraic or \emph{coordinate-based} formula expresses the scalar product of two vectors $\vec u=(x_1,y_1,z_1)$ and $\vec v=(x_2,y_2,z_2)$ as $\vec u\cdot\vec v=x_1x_2 +y_1y_2+z_1z_2$, which in turn depend on the co-ordinates of their endpoints. However, the scalar product for  high-dimensional vectors $\vec u,\vec v\in \R^n$ can also expressed in terms of only 3 parameters $\vec u\cdot\vec v=|\vec u|\cdot|\vec v|\cos\angle(\vec u,\vec v)$.
The two lengths $|\vec u|,|\vec v|$ and the angle $\angle(\vec u,\vec v)$ are isometry invariants of the vectors $\vec u,\vec v$.
This second geometric or \emph{invariant-based} formula makes it clear that $\vec u\cdot\vec v$ is an isometry invariant, while it is harder to show that $\vec u\cdot\vec v=x_1x_2 +y_1y_2+z_1z_2$ is invariant under rotations. It also provides other geometric insights that are hard to extract from the coordinate-based formula - for example, $\vec u\cdot\vec v$ oscillates as a cosine wave when the lengths $|\vec u|,|\vec v|$ are fixed, but the angle $\angle(\vec u,\vec v)$ is varying.
\medskip

In this paper, we provide a detailed proof of the invariant-based formula for the linking number in~\Cref{theorem:lk_arctan} and new corollaries in~\Cref{sec:behaviour} formally investigating the asymptotic behaviour of the linking number, which wasn't previously studied.
\medskip

Our own interest in the asymptotic behaviour is motivated by the \emph{periodic linking number} by Panagiotou \cite{panagiotou2015linking} as an invariant of crystalline networks \cite{cui2019mining} that are infinitely periodic in three directions, by calculating the infinite sum of the linking number between one line segment and all translated copies of another such segment. In \cite{panagiotou2015linking} there is a complex proof that this sum is convergent for a cubical lattice.
The convergence of the periodic linking numbers remains open for arbitrary lattices. 

\section{Outline of the invariant-based formula and consequences}
\label{sec:outline}

Folklore Theorem~\ref{theorem:lk_invariance} lists key properties of $\lk(\ga_1,\ga_2)$, which will be used later.

\begin{thm}[properties of the linking number]
\label{theorem:lk_invariance}
The linking number defined by the Gauss integral in~\Cref{definition:Gauss_integral} for smooth curves $\ga_1,\ga_2$ has the following properties:
\smallskip

\noindent
(\ref{theorem:lk_invariance}a)
the linking number is symmetric: $\lk(\ga_1,\ga_2)=\lk(\ga_2,\ga_1)$;
\smallskip

\noindent
(\ref{theorem:lk_invariance}b)
$\lk(\ga_1,\ga_2)=0$ for any curves $\ga_1,\ga_2$ that belong to the same plane;
\smallskip

\noindent
(\ref{theorem:lk_invariance}c)
$\lk(\ga_1,\ga_2)$ is independent of orientation-preserving parameterisations of the open curves $\ga_1,\ga_2$ with fixed endpoints;
\smallskip

\noindent
(\ref{theorem:lk_invariance}d)
$\lk(-\ga_1,\ga_2)=-\lk(\ga_1,\ga_2)$, where $-\ga_1$ has the reversed orientation of $\ga_1$;
\smallskip

\noindent
(\ref{theorem:lk_invariance}e)
the linking number $\lk(\ga_1,\ga_2)$ is invariant under any scaling $\vec v\to \la\vec v$ for $\la>0$;
\smallskip

\noindent
(\ref{theorem:lk_invariance}f)
$\lk(\ga_1,\ga_2)$ is multiplied by $\det M$ under any orthogonal map $\vec v\mapsto M\vec v$. 
\end{thm}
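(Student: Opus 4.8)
The plan is to verify each of the six properties directly from the Gauss integral in \Cref{definition:Gauss_integral} by manipulating the integrand, which is built from the triple product $(\dot\ga_1(t),\dot\ga_2(s),\ga_1(t)-\ga_2(s))$ divided by $|\ga_1(t)-\ga_2(s)|^3$. Most of these follow from elementary properties of the triple product together with changes of variables in the double integral, so I would group them accordingly rather than treating all six in isolation.

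\smallskip

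First, for the symmetry~(\ref{theorem:lk_invariance}a), I would swap the roles of the two curves and use the antisymmetry of the triple product: interchanging the first two arguments gives $(\dot\ga_2,\dot\ga_1,\vec w)=-(\dot\ga_1,\dot\ga_2,\vec w)$, while the displacement vector reverses sign, $\ga_2-\ga_1=-(\ga_1-\ga_2)$, and $(\vec u,\vec v,-\vec w)=-(\vec u,\vec v,\vec w)$; the two sign changes cancel, and relabelling the dummy variables $t\lra s$ shows $\lk(\ga_2,\ga_1)=\lk(\ga_1,\ga_2)$. For the reversal property~(\ref{theorem:lk_invariance}d), reparameterising $\ga_1$ by $t\mapsto 1-t$ reverses the sign of $\dot\ga_1$, hence of the triple product, giving the claimed sign flip; this same computation, done for an arbitrary orientation-preserving reparameterisation, yields~(\ref{theorem:lk_invariance}c) by the standard change-of-variables formula, since the Jacobian factors cancel against the derivatives $\dot\ga_1,\dot\ga_2$ — the integrand is a differential $2$-form pulled back to the square, so it is reparameterisation-invariant. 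For the scaling invariance~(\ref{theorem:lk_invariance}e), substituting $\ga_i\mapsto\la\ga_i$ scales each derivative and the displacement by $\la$, so the triple product scales by $\la^3$ while the denominator $|\ga_1-\ga_2|^3$ also scales by $\la^3$, and the two cancel.

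\smallskip

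The planarity vanishing~(\ref{theorem:lk_invariance}b) is geometric: if both curves lie in a common plane $P$, then $\dot\ga_1$, $\dot\ga_2$, and $\ga_1-\ga_2$ are all parallel to $P$, hence coplanar, so their triple product vanishes identically and the integral is zero. Finally, for the behaviour under an orthogonal map~(\ref{theorem:lk_invariance}f), I would apply $M$ to both curves and use two facts about orthogonal transformations: that $M$ preserves lengths, so the denominator $|M\ga_1-M\ga_2|^3=|\ga_1-\ga_2|^3$ is unchanged, and that the triple product transforms by the determinant, $(M\vec u,M\vec v,M\vec w)=(\det M)(\vec u,\vec v,\vec w)$. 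The latter is the key identity: it follows because the triple product is the determinant of the matrix with columns $\vec u,\vec v,\vec w$, and conjugating by $M$ multiplies this determinant by $\det M$. Since $\dot\ga_1,\dot\ga_2$ transform linearly under $M$, every factor in the integrand is accounted for, and the whole integral is multiplied by $\det M$.

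\smallskip

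I expect the only genuinely delicate point to be the reparameterisation-invariance in~(\ref{theorem:lk_invariance}c), where I must argue carefully that the chain-rule factors from $\dot\ga_1,\dot\ga_2$ exactly cancel the Jacobian of the substitution so that the double integral is unchanged; the cleanest way to present this is to observe that the integrand is the pullback of the area form on $S^2$ under the Gauss map $\Ga$ described after \Cref{definition:Gauss_integral}, making invariance under orientation-preserving reparameterisation manifest. Everything else reduces to the two linear-algebra identities for the triple product (antisymmetry and the determinant-scaling law) combined with routine substitutions, so those parts should be stated briefly without belabouring the computations.
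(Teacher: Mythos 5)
Your proposal is correct and follows essentially the same route as the paper's proof: antisymmetry of the triple product for (a), coplanarity for (b), change of variables for (c)--(e), and the identities $|M\vec u|=|\vec u|$ and $(M\vec u,M\vec v,M\vec w)=(\det M)(\vec u,\vec v,\vec w)$ for (f). You actually treat (c) more carefully than the paper, which dismisses it in one line, but the underlying argument is the same.
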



\begin{proof}
(\ref{theorem:lk_invariance}a) 
We note that the Euclidean distance is symmetric, and that since the triple product is anti-symmetric and $\ga_2(s) - \ga_1(t) = -(\ga_1(t) - \ga_2(s)$, the symmetry follows from
\begin{align*}
(\dot{\ga}_1(s),\dot{\ga}_2(t),\ga_2(s)-\ga_2(t)) &= -(\dot{\ga}_1(t),\dot{\ga}_2(s),  \ga_2(s)-\ga_1(t)) \\
 & = -(\dot{\ga}_1(t),\dot{\ga}_2(s),  -(\ga_1(t)-\ga_2(s)) ) \\
 &=  (\dot{\ga}_1(t),\dot{\ga}_2(s),  \ga_1(t)-\ga_2(s) ).
\end{align*}

\noindent
(\ref{theorem:lk_invariance}b) is obvious from the coplanarity of the normal vectors.
\smallskip

\noindent
(\ref{theorem:lk_invariance}c) is simply a consequence of the path-independence of the integrals over $s,t$.
\smallskip

\noindent
(\ref{theorem:lk_invariance}d) follows from $\dot{\gamma}_1(-t) = -\dot{\gamma}(t)$ since the reverse orientation of $\gamma_1(t)$ is $\gamma_1(-t)$.
\smallskip
 
\noindent
(\ref{theorem:lk_invariance}e)
Any scaling $\vec{v} \mapsto \lambda\vec{v}$ will result in a change of parameterisation $\gamma_i(t) \mapsto \gamma_i(\lambda t) = \lambda(\gamma_I)(t)$. 
Since $\dot{\lambda \gamma}_i(t) = \lambda \dot{\gamma}_i(t)$, the result follows below
 
\begin{align*}
 \lk(\ga_1(\lambda t),\ga_2(\lambda s)) & =\dfrac{1}{4\pi}\int\limits_0^1\int\limits_0^1 \dfrac{(\lambda \dot{\ga}_1(t),\lambda\dot{\ga}_2(s),\lambda(\ga_1(t)-\ga_2(s)))}{|\lambda(\ga_1(t)-\ga_2(s))|^3}dt ds\\
& =\dfrac{1}{4\pi} \int\limits_0^1\int\limits_0^1  \dfrac{\lambda^3(\dot{\ga}_1(t),\dot{\ga}_2(s),\ga_1(t)-\ga_2(s))}{\lambda^3 |\ga_1(t)-\ga_2(s)|^3}dt ds\\
&= \lk(\ga_1(t), \ga_2(s)),
 \end{align*}
\smallskip
 
\noindent
(\ref{theorem:lk_invariance}f) 
For an orthogonal transformation $M$, we have $M\vec{u} \times M\vec{v} = (\det M)  M (\vec{u} \times \vec{v})$, while $M\vec{u}\cdot M\vec{v} = \vec{u}\cdot\vec{v}$. 
Therefore $|M\vec{v} - M\vec{u}| = |\vec{v} - \vec{u}|$, $(M\vec{u}, M\vec{v}, M\vec{w}) = \det M(\vec{u}, \vec{v}, \vec{w})$
and $\lk(M\gamma_1, M \gamma_2) = (\det{M})  (\lk (\gamma_1, \gamma_2))$, so the linking number is multiplied by $\det{M}$.
\bs 
\end{proof}

Our main~\Cref{theorem:lk_arctan} will prove an analytic formula for the linking number of any line segments $L_1,L_2$ in terms of 6 isometry invariants of $L_1,L_2$, which are introduced in~\Cref{lemma:segments_parameters}.
Simpler~\Cref{cor:lk_orthogonal_at0} expresses $\lk(L_1,L_2)$ for any \emph{simple} orthogonal oriented segments $L_1,L_2$ defined by their lengths $l_1,l_2>0$ and initial endpoints $O_1,O_2$, respectively, with the Euclidean distance $d(O_1,O_2)=d>0$, so that $\vec L_1,\vec L_2,\ora{O_1O_2}$ form a positively oriented orthogonal basis whose signed volume $(\vec L_1,\vec L_2,\ora{O_1O_2})=l_1l_2d$ is the product of the lengths, see the first picture in Fig.~\ref{fig:segments_parameters}.

\begin{crl}[linking number for simple orthogonal segments]
\label{cor:lk_orthogonal_at0}
For any simple orthogonal oriented line segments $L_1,L_2\subset\R^3$ with lengths $l_1,l_2$ and a distance $d$ as defined above, the linking number is 
$\lk(L_1,L_2)=-\dfrac{1}{4\pi}\arctan\left(\dfrac{l_1 l_2}{d\sqrt{l_1^2+l_2^2+d^2}}\right)$.
\end{crl}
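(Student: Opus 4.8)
The plan is to exploit the orthogonal-frame hypothesis to choose coordinates in which the Gauss integral of~\Cref{definition:Gauss_integral} has a constant numerator, and then to evaluate the resulting double integral directly. First I would place $O_1$ at the origin and set $\ga_1(t)=(t,0,0)$ for $t\in[0,l_1]$, so that $\vec L_1=(l_1,0,0)$ lies along the $x$-axis; then I put $O_2=(0,0,d)$, so $\ora{O_1O_2}=(0,0,d)$ lies along the $z$-axis, and $\ga_2(s)=(0,s,d)$ for $s\in[0,l_2]$, so that $\vec L_2=(0,l_2,0)$ lies along the $y$-axis. These three vectors are pairwise orthogonal with $(\vec L_1,\vec L_2,\ora{O_1O_2})=l_1l_2d>0$, matching the positively oriented frame in the hypothesis; by~(\ref{theorem:lk_invariance}f) the rigid motion carrying the given configuration to this one leaves $\lk$ unchanged (its determinant is $+1$), and by parameterisation independence~(\ref{theorem:lk_invariance}c) I may use these unit-speed parameterisations in place of the $[0,1]$-domain of~\Cref{definition:Gauss_integral}.

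In these coordinates $\dot\ga_1=(1,0,0)$ and $\dot\ga_2=(0,1,0)$, so $\dot\ga_1\times\dot\ga_2=(0,0,1)$, while $\ga_1(t)-\ga_2(s)=(t,-s,-d)$. Hence the numerator of the integrand is the constant $(\dot\ga_1,\dot\ga_2,\ga_1-\ga_2)=(0,0,1)\cdot(t,-s,-d)=-d$, and the denominator is $(t^2+s^2+d^2)^{3/2}$, so the Gauss integral collapses to
\begin{equation*}
\lk(L_1,L_2)=-\dfrac{d}{4\pi}\int_0^{l_1}\!\!\int_0^{l_2}\dfrac{ds\,dt}{(t^2+s^2+d^2)^{3/2}}.
\end{equation*}
The inner integral over $s$ is elementary: writing $a^2=t^2+d^2$, a standard antiderivative gives $\int_0^{l_2}(a^2+s^2)^{-3/2}\,ds=l_2\,a^{-2}(a^2+l_2^2)^{-1/2}$, which reduces the problem to the single integral $\displaystyle\int_0^{l_1}\dfrac{l_2\,dt}{(t^2+d^2)\sqrt{t^2+d^2+l_2^2}}$.

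The main obstacle is this last integral, which has the form $\int dt/\big((t^2+b^2)\sqrt{t^2+c^2}\big)$ with $b^2=d^2$ and $c^2=d^2+l_2^2$. I would guess and verify by differentiation that an antiderivative is $\dfrac{1}{b\sqrt{c^2-b^2}}\arctan\!\Big(\dfrac{t\sqrt{c^2-b^2}}{b\sqrt{t^2+c^2}}\Big)$; the differentiation is routine and produces exactly $\big((t^2+b^2)\sqrt{t^2+c^2}\big)^{-1}$. Since here $c^2-b^2=l_2^2$, this antiderivative equals $\dfrac{1}{d\,l_2}\arctan\!\Big(\dfrac{t\,l_2}{d\sqrt{t^2+d^2+l_2^2}}\Big)$, which vanishes at $t=0$. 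Evaluating at $t=l_1$ and multiplying by the surviving factors $l_2$ and $-d/(4\pi)$ yields precisely $-\dfrac{1}{4\pi}\arctan\!\Big(\dfrac{l_1l_2}{d\sqrt{l_1^2+l_2^2+d^2}}\Big)$, as claimed. The only genuinely delicate points are pinning down the orientation that fixes the overall minus sign, and checking the arctangent antiderivative; everything else is bookkeeping once the coordinate frame is in place.
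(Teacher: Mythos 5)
Your proof is correct, but it takes a genuinely different route from the paper. The paper obtains~\Cref{cor:lk_orthogonal_at0} as a two-line specialisation of the general invariant-based formula in~\Cref{theorem:lk_arctan}: setting $\al=\frac{\pi}{2}$ and $a_1=a_2=0$ kills three of the four $\at$ terms (since $ab\sin\al+d^2\cot\al=0$ whenever $ab=0$ and $\cot\frac{\pi}{2}=0$), leaving only $-\frac{1}{4\pi}\at(l_1,l_2;d,\frac{\pi}{2})$. You instead evaluate the Gauss integral of~\Cref{definition:Gauss_integral} from scratch in an adapted orthogonal frame, where the triple product in the numerator collapses to the constant $-d$, and then perform two elementary integrations; I checked your antiderivatives $\frac{s}{a^2\sqrt{a^2+s^2}}$ and $\frac{1}{b\sqrt{c^2-b^2}}\arctan\bigl(\frac{t\sqrt{c^2-b^2}}{b\sqrt{t^2+c^2}}\bigr)$ by differentiation and both are right, and your frame does realise the positively oriented basis with $(\vec L_1,\vec L_2,\ora{O_1O_2})=l_1l_2d>0$ required by the hypothesis. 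What your approach buys is self-containedness: it does not depend on~\Cref{theorem:lk_arctan} and hence not on the chain of Lemmas~\ref{lemma:lk=integral_pq}--\ref{lemma:arctan}, and it can serve as an independent sanity check of the general formula in this special case (your arctan antiderivative is essentially the $\al=\frac{\pi}{2}$, $r$-fixed specialisation of~\Cref{lemma:arctan}). What the paper's route buys is brevity and uniformity, since the corollary falls out of the already-proved general statement with no new integration. The only points worth tightening in your write-up are cosmetic: invariance under the translation part of the rigid motion is not literally covered by~(\ref{theorem:lk_invariance}f), which speaks of orthogonal maps, though it is immediate because the integrand depends only on derivatives and differences of points; and you should note explicitly that $b=|d|=d$ when you apply the antiderivative, which is where the hypothesis $d>0$ enters.
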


The above expression is a special case of general formula (\ref{theorem:lk_arctan}) for $a_1=a_2=0$ and $\al=\dfrac{\pi}{2}$. 
If $l_1=l_2=l$, the linking number in~\Cref{cor:lk_orthogonal_at0} becomes $\lk(L_1,L_2)=-\dfrac{1}{4\pi}\arctan\dfrac{l^2}{d\sqrt{2l^2+d^2}}$.
If $l_1=l_2=d$, then $\lk(L_1,L_2)=-\dfrac{1}{4\pi}\arctan\dfrac{1}{\sqrt{3}}=-\dfrac{1}{24}$. 
\medskip

\Cref{cor:lk_orthogonal_at0} implies that the linking number is in the range $(-\frac{1}{8},0)$ for any simple orthogonal segments with $d>0$, which wasn't obvious from~\Cref{definition:Gauss_integral}. 
If $L_1,L_2$ move away from each other, then
$\lim\limits_{d\to+\infty} \lk(L_1,L_2)=-\dfrac{1}{4\pi}\arctan 0=0$.
Alternatively, if segments with $l_1=l_2=l$ become infinitely short, the limit is again zero: $\lim\limits_{l\to 0}\lk(L_1,L_2)=0$ for any fixed  $d$.
The limit $\lim\limits_{x\to+\infty}\arctan x=\dfrac{\pi}{2}$ implies that if  segments with $l_1=l_2=l$ become infinitely long for a fixed distance $d$, $\lim\limits_{l\to +\infty}\lk(L_1,L_2)=-\dfrac{1}{4\pi}\arctan\dfrac{l^2}{d\sqrt{2l^2+d^2}}=-\dfrac{1}{8}$.
If we push segments $L_1,L_2$ of fixed (possibly different) lengths $l_1,l_2$ towards each other, the same limit similarly emerges:
$\lim\limits_{d\to 0}\lk(L_1,L_2)=-\dfrac{1}{8}$.
See more general corollaries in section~\ref{sec:behaviour}.

\section{Past results about the Gauss integral for the linking number}
\label{sec:past}

The survey \cite{ricca2011gauss}  reviews the history of the Gauss integral, its use in Maxwell's description of electromagnetic fields \cite{maxwell1873treatise}, and its interpretation as the degree of a map from the torus to the sphere.
In classical knot theory $\lk(\ga_1,\ga_2)$ is a topological invariant of a link consisting of closed curves $\ga_1\sqcup\ga_2$, whose equivalence relation is ambient isotopy. This relation is too flexible for open curves which can be isotopically unwound, and hence doesn't preserve the Gauss integral for open curves $\ga_1,\ga_2$.
\medskip

Computing the value of the Gauss integral directly from the parametric equation of two generic curves is only possible by approximation, but this problem is simplified when we consider simply straight lines. 
The first form of the linking number between two straight line segments in terms of their geometry is described by Banchoff \cite{banchoff1976self}. Banchoff considers the projection of segments on to a plane orthogonal to some vector $\xi \in S^2$. The Gauss integral is interpreted as the fraction of the unit sphere covered by those directions of $\xi$ for which the projection will have a crossing. 
\medskip

This interpretation was the foundation of a closed form  developed by Arai \cite{arai2013rigorous}, using van Oosterom and Strackee's closed formula for the solid angle subtended by a tetrahedron given by the origin of a sphere and three points on its surface. 
An efficient implementation of the solid angle approach to the linking number is discussed in \cite{bertolazzi2019efficient}.
\medskip

An alternative calculation for this solid angle is given in \cite{panagiotou2020knot} as a starting point for calculating further invariants of open entangled curves. 
This form does not employ geometric invariants, but was used in~\cite{klenin2000computation} to claim a formula (without a proof) similar to~\Cref{theorem:lk_arctan}, which is proved in this paper with more corollaries in section~\ref{sec:behaviour}. 

\sepfigure

\begin{figure}[h]
\includegraphics[width=\textwidth]{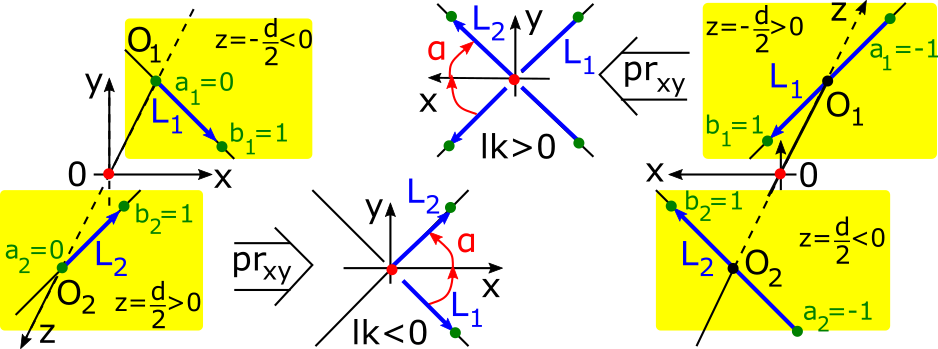}
\label{fig:segments_parameters}
\caption{Each line segment $L_i$ is in the plane $\{z=(-1)^i \frac{d}{2}\}$, $i=1,2$.
\textbf{Left}: signed distance $d>0$, the endpoint coordinates $a_1=0$, $b_1=1$ and $a_2=0$, $b_2=1$, the lengths $l_1=l_2=1$.
\textbf{Right}: signed distance $d<0$, the endpoint coordinates $a_1=-1$, $b_1=1$ and $a_2=-1$, $b_2=1$, so $l_1=l_2=2$.
In both middle pictures $\al=\frac{\pi}{2}$ is the angle from $\pr_{xy}(L_1)$ to $\pr_{xy}(L_2)$ with $x$-axis as the bisector.}
\end{figure}

\sepfigure

\section{Six isometry invariants of skew line segments in 3-space}
\label{sec:parameterization}

This section introduces six isometry invariants, which uniquely determine positions of any line segments $L_1,L_2\subset\R^3$ modulo isometries of $\R^3$, see ~\Cref{lemma:segments_parameters}.
\medskip

It suffices to consider only \emph{skew} line segments that do not belong to the same 2-dimensional plane.
If $L_1,L_2$ are in the same plane $\Pi$, for example if they are parallel, then $\dot L_1(t)\times \dot L_2(s)$ is orthogonal to any vector $L_1(t)-L_2(s)$ in the plane $\Pi$, hence $\lk(L_1,L_2)=0$.
We denote by $\bar L_1,\bar L_2\subset\R^3$ the infinite oriented lines through the given line segments $L_1,L_2$, respectively.
In a plane with fixed coordinates $x,y$, all angles are  measured anticlockwise from the positive $x$-axis.

\begin{dfn}[invariants of line segments]
\label{definition:segments_parameters}
Let $\al\in[0,\pi]$ be the angle between oriented line segments $L_1,L_2\subset\R^3$.
Assuming that $L_1,L_2$ are not parallel, there is a unique pair of parallel planes $\Pi_i$, $i=1,2$, each containing the infinite line $\bar L_i$ through the line segment $L_i$.
We choose orthogonal coordinates $x,y,z$ in $\R^3$ so that  
\medskip

\noindent
(\ref{definition:segments_parameters}a)
the horizontal plane $\{z=0\}$ is in the middle between $\Pi_1,\Pi_2$, see~Fig.~\ref{fig:segments_parameters};
\medskip

\noindent
(\ref{definition:segments_parameters}b)
$(0,0,0)$ is the intersection of the projections $\pr_{xy}(\bar L_1),\pr_{xy}(\bar L_2)$ to $\{z=0\}$;
\medskip

\noindent
(\ref{definition:segments_parameters}c)
the $x$-axis bisects the angle $\al$ from $\pr_{xy}(\bar L_1)$ to $\pr_{xy}(\bar L_2)$, the $y$-axis is chosen so that $\al$ is anticlockwisely measured from the $x$-axis to the $y$-axis in $\{z=0\}$;
\medskip

\noindent
(\ref{definition:segments_parameters}d)
the $z$-axis is chosen so that $x,y,z$ are oriented in the right hand way, then $d$ is the \emph{signed} distance from $\Pi_1$ to $\Pi_2$; the distance $d$ is negative if the vector $\ora{O_1O_2}$ is opposite to the positively oriented $z$-axis in Fig.~\ref{fig:segments_parameters}.
\medskip

\noindent
Let $a_i,b_i$ be the coordinates of the initial and final endpoints of the segments $L_i$ in the infinite line $\bar L_i$ whose origin is $O_i=\Pi_i\cap(z\text{-axis})=(0,0,(-1)^i\frac{d}{2})$, $i=1,2$. 
\end{dfn}

The case of segments $L_1,L_2$ lying in the same plane $\Pi\subset\R^3$ can be formally covered by~\Cref{definition:segments_parameters} if we allow the signed distance $d$ from $\Pi_1$ to $\Pi_2$ to be 0.

\begin{lem}[parameterisation]
\label{lemma:segments_parameters}
Any oriented line segments $L_1,L_2\subset\R^3$ are uniquely determined modulo a rigid motion by their isometry invariants $\al\in[0,\pi]$ and $d$, $a_1$, $b_1$, $a_2$, $b_2\in\R$ from~\Cref{definition:segments_parameters}.
For $l_i=b_i-a_i$, $i=1,2$, each line segment $L_i$ is 
$$L_i(t) = \Big(\; 
(a_i+l_i t)\cos\frac{\al}{2},\; 
(-1)^i(a_i+l_i t)\sin\frac{\al}{2},\; 
(-1)^i \frac{d}{2} \;\Big), \; t\in[0,1].
\leqno{(\ref{lemma:segments_parameters})}$$
\end{lem}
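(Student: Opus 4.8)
The plan is to prove the two assertions of the lemma together by exhibiting, for any skew (non-parallel) oriented pair $L_1,L_2$, a \emph{unique} orientation-preserving isometry that carries them into the normal position governed by conventions (\ref{definition:segments_parameters}a)--(\ref{definition:segments_parameters}d), and then to read off the coordinates of the image. The coplanar case is recovered by letting $d\to 0$, so I assume throughout that the direction vectors $\vec L_1,\vec L_2$ are linearly independent, i.e. $\vec n:=\vec L_1\times\vec L_2\neq\vec 0$.

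First I would build the frame. The vector $\vec n$ is orthogonal to both $\vec L_1$ and $\vec L_2$, so the planes $\Pi_i$ through $\bar L_i$ with common normal $\vec n$ are parallel; they are the only such pair, since any plane through $\bar L_i$ whose normal is perpendicular to $\vec L_1$ and $\vec L_2$ must have that normal along $\vec n$. Taking the $z$-axis along $\vec n/|\vec n|$ and the plane $\{z=0\}$ midway between $\Pi_1,\Pi_2$ fixes (\ref{definition:segments_parameters}a) and puts $\Pi_i$ at height $(-1)^i\frac{d}{2}$. Because $\hat u_i:=\vec L_i/|\vec L_i|$ is orthogonal to $\vec n$, each $\bar L_i$ is horizontal and coincides with its own projection direction, so $\pr_{xy}(\bar L_1),\pr_{xy}(\bar L_2)$ are non-parallel lines in $\{z=0\}$ meeting in exactly one point, which I place at the origin to obtain (\ref{definition:segments_parameters}b). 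Choosing the positive $x$-axis as the internal bisector oriented so that $\pr_{xy}(\bar L_1)$ lies at angle $-\frac{\al}{2}$ and $\pr_{xy}(\bar L_2)$ at angle $+\frac{\al}{2}$, with the $y$-axis a quarter-turn anticlockwise, fixes (\ref{definition:segments_parameters}c); a direct computation $\hat u_1\times\hat u_2=(0,0,\sin\al)$ with $\al\in(0,\pi)$ then confirms that the right-handed $z$-axis of (\ref{definition:segments_parameters}d) agrees with the direction of $\vec n$ already chosen, so all the conventions are mutually consistent and the frame, hence the normalising isometry, is unique.

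With the frame in place the formula is immediate. The point of $\bar L_i$ over the origin is $O_i=(0,0,(-1)^i\frac{d}{2})$, the unit direction is $\hat u_i=(\cos\frac{\al}{2},(-1)^i\sin\frac{\al}{2},0)$, and since $a_i,b_i$ are the signed arc-length coordinates of the endpoints measured from $O_i$, the segment is $L_i(t)=O_i+(a_i+l_i t)\,\hat u_i$ for $t\in[0,1]$, which expands to the displayed expression~(\ref{lemma:segments_parameters}). For the uniqueness statement I would observe that every quantity used above---the angle between directions, the signed gap between the parallel planes, and the signed positions of the endpoints relative to the canonical base points $O_i$---is preserved by isometries, so the six invariants depend only on the congruence class; conversely, two pairs with equal invariants are sent by their respective unique normalising motions to the identical standard configuration~(\ref{lemma:segments_parameters}), hence differ by a rigid motion.

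The main obstacle is the bookkeeping of signs and orientations rather than any hard estimate: one must check that the internal-versus-external choice of angle bisector, the sign convention for $d$ via $\ora{O_1O_2}$, and the right-hand rule for the axes all belong to a single consistent frame, which is exactly what the identity $\hat u_1\times\hat u_2=(0,0,\sin\al)$ secures. Establishing the uniqueness of the parallel planes and that the projected lines meet in a single point is the only other point needing care.
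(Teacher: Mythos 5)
Your proposal is correct and follows essentially the same strategy as the paper: identify the unique pair of parallel planes containing the two lines, apply a rigid motion placing the common perpendicular on the $z$-axis with the $x$-axis bisecting the projected angle, and read off the parametrisation $L_i(t)=O_i+(a_i+l_it)\hat u_i$. Your use of $\vec n=\vec L_1\times\vec L_2$ to fix the $z$-direction and the explicit check $\hat u_1\times\hat u_2=(0,0,\sin\al)$ is a minor (and welcome) variation on the paper's construction of the points $O_i$ via projected lines, and your two-directional argument for uniqueness modulo rigid motion is slightly more explicit than the paper's.
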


\begin{proof}
Any line segments $L_1,L_2\subset\R^3$ that are not in the same plane are contained in distinct parallel planes.
For $i=1,2$, the plane $\Pi_i$ is spanned by $L_i$ and the line parallel to $L_{3-i}$ and passing through an endpoint of $L_i$.  
Let $L'_i$ be the orthogonal projection of the line segment $L_i$ to the plane $\Pi_{3-i}$.
The non-parallel lines through the segments $L_i$ and $L'_{3-i}$ in the plane $\Pi_i$ intersect at a point, say $O_i$.
Then the line segment $O_1 O_2$ is orthogonal to both planes $\Pi_i$, hence to both $L_i$ for $i=1,2$.
\medskip

By~\Cref{theorem:lk_invariance}, to compute $\lk(L_1,L_2)$, one can apply a rigid motion to move the mid-point of the line segment $O_1O_2$ to the origin $O=(0,0,0)\in\R^2$ and make $O_1 O_2$ vertical, i.e. lying within the $z$-axis.
The signed distance $d$ can be defined as the difference between the coordinates of $O_2=\Pi_2\cap(z\text{-axis})$ and $O_1=\Pi_1\cap(z\text{-axis})$ along the $z$-axis.
Then $L_i$ lies in the horizontal plane $\Pi_i=\{z=(-1)^i \frac{d}{2}\}$, $i=1,2$.
\medskip

An extra rotation around the $z$-axis  guarantees that the $x$-axis in the horizontal plane $\Pi=\{z=0\}$ is the bisector of the angle $\al\in[0,\pi]$ from $\pr_{xy}(\bar L_1)$ to $\pr_{xy}(\bar L_2)$, where $\pr_{xy}:\R^3\to\Pi$ is the orthogonal projection. 
Then the infinite lines $\bar L_i$ through $L_i$ have the parametric form $(x,y,z)=(t\cos\frac{\al}{2},(-1)^i t\sin\frac{\al}{2},(-1)^i\frac{d}{2})$ with $s\in\R$.
\medskip

The point $O_i$ can be considered as the origin of the oriented infinite line $\bar L_i$.
Let the line segment $L_i$ have a length $l_i>0$ and its initial point have the coordinate $a_i\in\R$ in the oriented line $\bar L_i$.
Then the final endpoint of  $L_i$ has the coordinate $b_i=a_i+l_i$.
To cover only the segment $L_i$, the parameter $t$ should be replaced by $a_i+l_it$, $t\in[0,1]$.
\bs
\end{proof}

If $t\in\R$ in~\Cref{lemma:segments_parameters}, the corresponding point $L_i(t)$ moves along the line $\bar L_i$.
\medskip

\begin{lem}[formulae for invariants] 
\label{lemma:formulae_parameters}
Let $L_1,L_2\subset\R^3$ be any skewed oriented line segments given by their initial and final endpoints $A_i,B_i\in\R^3$ so that $\vec L_i=\ora{A_iB_i}$, $i=1,2$.
Then the isometry invariants of $L_1,L_2$ in~\Cref{lemma:segments_parameters} are computed as follows:
\medskip

\noindent
the lengths $l_i=|\ora{A_iB_i}|$, the signed distance $d=\dfrac{[\vec L_1,\vec L_2,\ora{A_1 A_2}]}{|\vec L_1\times\vec L_2|}$, the angle $\al=\arccos\dfrac{\vec L_1\cdot\vec L_2}{l_1l_2}$,
$a_1=\left(\dfrac{\vec L_2}{l_2}\cos\al-\dfrac{\vec L_1}{l_1}\right)\cdot\dfrac{\ora{A_1 A_2}}{\sin^2\al}$,
$\; a_2=\left(\dfrac{\vec L_2}{l_2}-\dfrac{\vec L_1}{l_1}\cos\al\right)\cdot\dfrac{\ora{A_1 A_2}}{\sin^2\al}$,
$\; b_i=a_i+l_i$, $i=1,2$.
\end{lem}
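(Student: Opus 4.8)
The plan is to invert the parameterisation of~\Cref{lemma:segments_parameters}: each invariant is first read off from the standard coordinates and then re-expressed intrinsically in terms of the given data $\vec L_1,\vec L_2,\ora{A_1A_2}$. Three of the six invariants are immediate. The lengths $l_i=|\ora{A_iB_i}|$ hold by definition, the final coordinates satisfy $b_i=a_i+l_i$ as already established in the proof of~\Cref{lemma:segments_parameters}, and the angle formula $\al=\arccos\frac{\vec L_1\cdot\vec L_2}{l_1l_2}$ is the standard expression for the angle between two vectors, well-defined because $\al\in[0,\pi]$ makes $\arccos$ single-valued. This leaves the signed distance $d$ and the two offsets $a_1,a_2$.

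For the signed distance I would use that $\vec L_1\times\vec L_2$ is orthogonal to both lines $\bar L_1,\bar L_2$ and hence points along the common perpendicular $O_1O_2$. Writing $\vec n=\frac{\vec L_1\times\vec L_2}{|\vec L_1\times\vec L_2|}$, the signed distance between the parallel planes $\Pi_1,\Pi_2$ is the projection $\ora{A_1A_2}\cdot\vec n$, which equals $\frac{[\vec L_1,\vec L_2,\ora{A_1A_2}]}{|\vec L_1\times\vec L_2|}$ since $(\vec L_1\times\vec L_2)\cdot\ora{A_1A_2}=[\vec L_1,\vec L_2,\ora{A_1A_2}]$. To confirm that this signed quantity agrees with the convention of~\Cref{definition:segments_parameters}, I would substitute the explicit parameterisation~(\ref{lemma:segments_parameters}): a short computation gives $\vec L_1\times\vec L_2=(0,0,l_1l_2\sin\al)$, which points along the positive $z$-axis because $\sin\al>0$ for skew segments, while the $z$-component of $\ora{A_1A_2}$ equals $d$; the two factors $l_1l_2\sin\al$ then cancel to leave exactly $d$.

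For $a_1,a_2$ I would argue intrinsically. Denoting the feet of the common perpendicular by $O_1,O_2$ and the unit directions by $\hat u_i=\frac{\vec L_i}{l_i}$, the definition of $a_i$ as the coordinate of $A_i$ along $\bar L_i$ from the origin $O_i$ gives $\ora{O_iA_i}=a_i\hat u_i$. Decomposing $\ora{A_1A_2}=\ora{A_1O_1}+\ora{O_1O_2}+\ora{O_2A_2}=-a_1\hat u_1+a_2\hat u_2+\ora{O_1O_2}$ and using that $\ora{O_1O_2}$ is orthogonal to both $\hat u_1,\hat u_2$, I would dot with $\hat u_1$ and $\hat u_2$ to obtain the linear system $\hat u_1\cdot\ora{A_1A_2}=-a_1+a_2\cos\al$ and $\hat u_2\cdot\ora{A_1A_2}=-a_1\cos\al+a_2$, whose coefficient matrix has determinant $-\sin^2\al$. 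Solving by Cramer's rule yields exactly the stated expressions, with the denominators $\sin^2\al$ appearing naturally.

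The routine parts are the two dot products and the $2\times 2$ solve; the only genuine subtlety is the sign bookkeeping for $d$. A triple product is fixed only up to sign by the ordering of its arguments and by the chosen orientation of the common perpendicular, so the main obstacle is verifying that the convention fixing the positive $z$-direction in~\Cref{definition:segments_parameters} coincides with the direction of $\vec L_1\times\vec L_2$; this is precisely what the explicit substitution above settles.
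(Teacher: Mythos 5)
Your proposal is correct and follows essentially the same route as the paper: the lengths, angle and $b_i=a_i+l_i$ are read off directly, $d$ is obtained as the triple product $[\vec L_1,\vec L_2,\ora{A_1A_2}]$ normalised by $|\vec L_1\times\vec L_2|$, and $a_1,a_2$ come from dotting the decomposition of $\ora{A_1A_2}$ through $\ora{O_1O_2}$ with the unit directions and solving the resulting $2\times 2$ system with determinant $-\sin^2\al$. The only cosmetic difference is that you verify the sign convention for $d$ by substituting the explicit parameterisation of \Cref{lemma:segments_parameters}, whereas the paper argues directly that $[\vec e_1,\vec e_2,\ora{O_1O_2}]=|\vec e_1\times\vec e_2|\,d$ with $\ora{O_1O_2}$ along the $z$-axis; both settle the same point.
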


\begin{proof}
The vectors along the segments are $\vec L_i=\vec v_i-\vec u_i$, hence the lengths are $l_i=|\vec L_i|=|\ora{A_iB_i}|$, $i=1,2$.
The angle $\al\in[0,\pi]$ between $\vec L_1,\vec L_2$ can be found from the scalar product $\vec L_1\cdot\vec L_2=|\vec L_1|\cdot|\vec L_2|\cos\al$ as $\al=\arccos\dfrac{\vec L_1\cdot\vec L_2}{l_1l_2}$, because the function $\arccos x:[-1,1]\to[0,\pi]$ is bijective. 
\medskip

Since the vectors $\vec L_1,\vec L_2$ are not proportional to each other, the normalised vector product $\vec e_3=\dfrac{\vec L_1\times\vec L_2}{|\vec L_1\times\vec L_2|}$ is well-defined and orthogonal to both $\vec L_1,\vec L_2$.
Then $\vec e_1=\dfrac{\vec L_1}{|\vec L_1|}$, $\vec e_2=\dfrac{\vec L_2}{|\vec L_2|}$ and $\vec e_3$ have lengths 1 and form a linear basis of $\R^3$, where the last vector is orthogonal to the first two.
\medskip

Let $O$ be any fixed point of $\R^3$, which can be assume to be
the origin $(0,0,0)$ in the coordinates of~\Cref{lemma:segments_parameters}, though its position relative to $\ora{A_iB_i}$ is not yet determined.
First we express the points $O_i=(0,0,(-1)^i\frac{d}{2})\in\bar L_i$ from Fig.~\ref{fig:segments_parameters} in terms of given vectors $\ora{A_iB_i}$.
If the initial endpoint $A_i$ has a coordinate $a_i$ in the line $\bar L_i$ through $L_i$, then $\ora{O_i A_i}=a_i\vec e_i$ and
$$\ora{O_1 O_2}=\ora{O O_2}-\ora{O O_1}=(\ora{O A_2}-\ora{O_2 A_2})-(\ora{O A_1}-\ora{O_1 A_1})=\ora{A_1 A_2}+a_1\vec e_1-a_2\vec e_2.$$
By~\Cref{definition:segments_parameters}, $\ora{O_1 O_2}$ is orthogonal to the line $\bar L_i$ going through the vector $\vec e_i=\dfrac{\vec L_i}{|\vec L_i|}$ for $i=1,2$. 
Then the product $[\vec e_1,\vec e_2,\ora{O_1 O_2}]=(\vec e_1\times\vec e_2)\cdot\ora{O_1 O_2}$ equals $|\vec e_1\times\vec e_2|d$, where $\ora{O_1O_2}$ is in the $z$-axis, the signed distance $d$ is the $z$-coordinate of $O_2$ minus the $z$-coordinates $O_1$.
\medskip

The product $[\vec e_1,\vec e_2,\ora{O_1 O_2}]=(\vec e_1\times\vec e_2)\cdot(\ora{A_1 A_2}+a_1\vec e_1-a_2\vec e_2)=(\vec e_1\times\vec e_2)\cdot\ora{A_1 A_2}$ doesn't depend on $a_1,a_2$, because $\vec e_1\times\vec e_2$ is orthogonal to both $\vec e_1,\vec e_2$.
Hence the signed distance is $d=\dfrac{[\vec e_1,\vec e_2,\ora{A_1 A_2}]}{|\vec e_1\times\vec e_2|}=\dfrac{[\vec L_1,\vec L_2,\ora{A_1 A_2}]}{|\vec L_1\times\vec L_2|}$, which can be positive or negative, see Fig.~\ref{fig:segments_parameters}.
\medskip

It remains to find the coordinate $a_i$ of the initial endpoint of $L_i$ relative to the origin $O_i\in \bar L_i$, $i=1,2$. 
The vector $\ora{O_1O_2}=\ora{A_1 A_2}+a_1\vec e_1-a_2\vec e_2$ is orthogonal to both $\vec e_i$ if and only if the scalar products vanish: 
$\ora{O_1O_2}\cdot\vec e_i=0$.
Due to $|\vec e_1|=1=|\vec e_2|$ and $\vec e_1\cdot\vec e_2=\cos\al$, we get
$$\left\{\begin{array}{l}
\vec e_1\cdot\ora{A_1 A_2}+a_1-a_2(\vec e_1\cdot\vec e_2)=0, \\
\vec e_2\cdot\ora{A_1 A_2}+a_1(\vec e_1\cdot\vec e_2)-a_2=0, \\
\end{array}\right.\qquad
\mat{1}{-\cos\al}{\cos\al}{-1}\vect{a_1}{a_2}=-\vect{\vec e_1\cdot\ora{A_1 A_2}}{\vec e_2\cdot\ora{A_1 A_2}}.$$
The determinant of the $2\times 2$ matrix is $\cos^2\al-1=-\sin^2\al\neq 0$, because $L_1,L_2$ are not parallel.
Then 
$\vect{a_1}{a_2}=\dfrac{1}{\sin^2\al}\mat{-1}{\cos\al}{-\cos\al}{1}\vect{\vec e_1\cdot\ora{A_1 A_2}}{\vec e_2\cdot\ora{A_1 A_2}}$.
$$a_1
=\dfrac{-\vec e_1\cdot\ora{A_1 A_2}+\cos\al(\vec e_2\cdot\ora{A_1 A_2})}{\sin^2\al}
=\dfrac{(\vec e_2\cos\al-\vec e_1)\cdot\ora{A_1 A_2}}{\sin^2\al}
=\left(\dfrac{\vec L_2}{l_2}\cos\al-\dfrac{\vec L_1}{l_1}\right)\cdot\dfrac{\ora{A_1 A_2}}{\sin^2\al},$$
$$a_2
=\dfrac{\cos\al(\vec e_1\cdot\ora{A_1 A_2})-\vec e_1\cdot\ora{A_1 A_2}}{\sin^2\al}
=\dfrac{(\vec e_2-\vec e_1\cos\al)\cdot\ora{A_1 A_2}}{\sin^2\al}
=\left(\dfrac{\vec L_2}{l_2}-\dfrac{\vec L_1}{l_1}\cos\al\right)\cdot\dfrac{\ora{A_1 A_2}}{\sin^2\al}.$$
The coordinates of the final endpoints are obtained as $b_i=a_i+l_i$, $i=1,2$. 
\bs
\end{proof}


\Cref{lemma:central_symmetry} guarantees that the linking number behaves symmetrically in $d$, meaning that we may confine any particular analysis to cases where $d>0$ or $d<0$. 

\begin{lem}[symmetry]
\label{lemma:central_symmetry}
Let $L_1,L_2\subset\R^3$ be parameterised as in~\Cref{lemma:segments_parameters}.
Under the central symmetry $\cs:(x,y,z)\mapsto(-x,-y,-z)$ with respect to the origin $(0,0,0)\in\R^3$, the line segments keep their invariants $\al,a_1,b_1,a_2,b_2$.
The signed distance $d$ and the linking number change their signs: $\lk(\cs(L_1),\cs(L_2))=-\lk(L_1,L_2)$.
\end{lem}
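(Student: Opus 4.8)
The plan is to treat the two assertions separately, since they come from different sources. The sign change of the linking number is immediate from property~(\ref{theorem:lk_invariance}f): the central symmetry $\cs$ is the orthogonal map $\vec v\mapsto M\vec v$ with $M=-I$, whose determinant is $\det(-I)=(-1)^3=-1$. Hence $\lk(\cs(L_1),\cs(L_2))=(\det M)\lk(L_1,L_2)=-\lk(L_1,L_2)$, with no further computation needed. The real content is therefore the behaviour of the six invariants, which I would establish using the coordinate-free formulae of~\Cref{lemma:formulae_parameters} rather than by applying $\cs$ directly to the parametrisation~(\ref{lemma:segments_parameters}).

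First I would record how the endpoint data transform. Writing $A_i,B_i$ for the endpoints of $L_i$ as position vectors, $\cs$ sends $A_i\mapsto-A_i$ and $B_i\mapsto-B_i$, so that $\vec L_i=\ora{A_iB_i}\mapsto-\vec L_i$ and $\ora{A_1A_2}\mapsto-\ora{A_1A_2}$. Substituting these into the formulae of~\Cref{lemma:formulae_parameters} is then pure sign-bookkeeping. The lengths $l_i=|{-\vec L_i}|=|\vec L_i|$ and the angle $\al=\arccos\dfrac{(-\vec L_1)\cdot(-\vec L_2)}{l_1l_2}$ are unchanged, since each is even in the signs of the $\vec L_i$. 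For the initial coordinate $a_1=(\frac{\vec L_2}{l_2}\cos\al-\frac{\vec L_1}{l_1})\cdot\frac{\ora{A_1A_2}}{\sin^2\al}$, the bracketed vector reverses sign (being homogeneous of degree one in $\vec L_1,\vec L_2$) and $\ora{A_1A_2}$ reverses sign, so the two sign changes cancel in the scalar product and $a_1$ is preserved; the same holds for $a_2$, whence $b_i=a_i+l_i$ is preserved as well.

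The only invariant that changes is the signed distance. Because the triple product $[\,\cdot\,,\cdot\,,\cdot\,]$ is trilinear, we get $[-\vec L_1,-\vec L_2,-\ora{A_1A_2}]=(-1)^3[\vec L_1,\vec L_2,\ora{A_1A_2}]$, while the denominator $|(-\vec L_1)\times(-\vec L_2)|=|\vec L_1\times\vec L_2|$ is unchanged, so $d\mapsto-d$. This matches the geometric picture: $\al,a_i,b_i$ are genuine metric quantities invariant under the whole orthogonal group, whereas $d$ is defined with a right-hand orientation convention and must flip under the orientation-reversing map $\cs$.

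I expect the only point requiring care to be the consistency between this route and a direct computation. If one instead applies $\cs$ to the parametrisation~(\ref{lemma:segments_parameters}), the image is not literally the canonical form with $d$ replaced by $-d$: the $z$-coordinate does come out as $(-1)^i\frac{-d}{2}$, but the $x$- and $y$-coordinates each pick up an extra sign, so the image differs from the canonical configuration by the rotation $(x,y,z)\mapsto(-x,-y,z)$ about the $z$-axis. Since that residual map is a rigid motion, it does not alter any invariant, and the two routes agree; invoking~\Cref{lemma:formulae_parameters} from the outset simply avoids having to recognise and discharge this auxiliary rotation.
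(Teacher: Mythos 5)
Your proposal is correct and follows essentially the same route as the paper: it tracks the sign changes of $\vec L_1,\vec L_2,\ora{A_1A_2}$ through the explicit formulae of~\Cref{lemma:formulae_parameters} to see that $\al,a_i,b_i$ are preserved while $d$ flips, and invokes~\Cref{theorem:lk_invariance}(f) with $\det(-I)=-1$ for the sign change of the linking number. Your additional remark identifying the residual rotation about the $z$-axis is a nice consistency check but not needed beyond what the paper does.
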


\begin{proof}[\Cref{lemma:central_symmetry}]
Under the central symmetry $\cs$, in the notation of~\Cref{lemma:formulae_parameters} the vectors $\vec L_1,\vec L_2,\ora{A_1A_2}$ change their signs.
Then the formulae for $\al,a_1,b_1,a_2,b_2$ gives the same expression, but the triple product $[\vec L_1,\vec L_2,\ora{A_1A_2}]$ and $d$ change their signs.
\medskip

Since the central symmetry $\cs$ is an orthogonal map $M$ with $\det M=-1$, the new linking number changes its sign as follows: $\lk(\cs(L_1),\cs(L_2))=\lk(\cs(L_2),\cs(L_1))=-\lk(L_1,L_2)$, where we also make use of the invariance of the linking number under exchange of the segments from~\Cref{theorem:lk_invariance}(f).
\bs
\end{proof}

\section{Invariant-based formula for the linking number of segments}
\label{sec:lk_formula}

This section proves main~\Cref{theorem:lk_arctan}, which expresses the linking number of any line segments in terms of their 6 isometry invariants from~\Cref{definition:segments_parameters}.
In 2000 Klenin and Langowski claimed a similar formula \cite{klenin2000computation}, but gave no proof, which requires substantial lemmas below.
One of their 6 invariants differs from the signed distance $d$.

\begin{thm}[invariant-based formula]
\label{theorem:lk_arctan}
For any line segments $L_1,L_2\subset\R^3$ with invariants $\al\in(0,\pi)$, $a_1,b_1,a_2,b_2,d\in\R$ from~\Cref{definition:segments_parameters}, we have
$\lk(L_1,L_2)=$
$$\dfrac{\at(a_1,b_2;d,\al)+\at(b_1,a_2;d,\al)-\at(a_1,a_2;d,\al)-\at(b_1,b_2;d,\al)}{4\pi},\eqno{(\ref{theorem:lk_arctan})}$$
where  
$\at(a,b;d,\al)=\arctan\left(\dfrac{ab\sin\al+d^2\cot\al}{d\sqrt{a^2+b^2-2ab\cos\al+d^2}}\right)$.
For $\al=0$ or $\al=\pi$, we set $\at(a,b;d,\al)=\sign(d)\dfrac{\pi}{2}$.
We also set $\lk(L_1,L_2)=0$ when $d=0$.
\end{thm}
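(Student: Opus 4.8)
The plan is to substitute the explicit parameterisation (\ref{lemma:segments_parameters}) into the Gauss integral of \Cref{definition:Gauss_integral} and reduce $\lk(L_1,L_2)$ to an elementary double integral. Writing $u=a_1+l_1t$ and $v=a_2+l_2s$, the velocity vectors $\dot L_1,\dot L_2$ are constant, so $\dot L_1\times\dot L_2$ equals the constant vector $l_1l_2\sin\al\,(0,0,1)$, while the third coordinate of $L_1(t)-L_2(s)$ is $-d$. Hence the triple product in the numerator is the constant $-l_1l_2d\sin\al$, and the identity $\cos\al=\cos^2\frac{\al}{2}-\sin^2\frac{\al}{2}$ gives $|L_1(t)-L_2(s)|^2=u^2+v^2-2uv\cos\al+d^2$. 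After the substitution $du=l_1\,dt$, $dv=l_2\,ds$, the Jacobian cancels the lengths and leaves
$$\lk(L_1,L_2)=\frac{-d\sin\al}{4\pi}\int_{a_2}^{b_2}\!\!\int_{a_1}^{b_1}\frac{du\,dv}{\big(u^2+v^2-2uv\cos\al+d^2\big)^{3/2}}.$$
This first reduction is routine linear algebra and is where \Cref{lemma:segments_parameters} does all the geometric work.

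Next I would evaluate the double integral as an iterated integral. For the inner integral over $u$ I complete the square, $u^2-2uv\cos\al+v^2+d^2=(u-v\cos\al)^2+(v^2\sin^2\al+d^2)$, and use the standard antiderivative (via $u-v\cos\al=\sqrt{v^2\sin^2\al+d^2}\,\tan\theta$) to obtain
$$\int\frac{du}{\big((u-v\cos\al)^2+v^2\sin^2\al+d^2\big)^{3/2}}=\frac{u-v\cos\al}{(v^2\sin^2\al+d^2)\sqrt{u^2+v^2-2uv\cos\al+d^2}}.$$
Observe that $R:=u^2+v^2-2uv\cos\al+d^2\ge d^2>0$ whenever $d\neq0$, so the root never vanishes and no singularities arise. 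The crux is then the outer integral over $v$: rather than integrate a linear-over-(quadratic)$\sqrt{\text{quadratic}}$ expression from scratch, I would read off its antiderivative from the target formula and verify by differentiation that, with $u$ fixed, $\dfrac{\partial}{\partial v}\at(u,v;d,\al)=\dfrac{d\sin\al\,(u-v\cos\al)}{(v^2\sin^2\al+d^2)\sqrt{R}}$, i.e.\ exactly $d\sin\al$ times the inner-integral result.

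The main obstacle is this last verification, which rests on two algebraic coincidences that I would isolate as the heart of the proof. Writing $h=\tfrac{uv\sin\al+d^2\cot\al}{d\sqrt R}$, the quantity $1+h^2$ factors as $\tfrac{(u^2\sin^2\al+d^2)(v^2\sin^2\al+d^2)}{d^2R\sin^2\al}$ after the cross terms $\pm 2uvd^2\cos\al$ cancel; and the numerator $n_vR-\tfrac12 nR_v$ of $\partial_v h$ (with $n=uv\sin\al+d^2\cot\al$) collapses to $\tfrac{(u^2\sin^2\al+d^2)(u-v\cos\al)}{\sin\al}$. Dividing $\partial_v h$ by $1+h^2$ then produces the claimed derivative, all messy factors cancelling. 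Granting this, two applications of the fundamental theorem of calculus give the double integral as $\tfrac{1}{d\sin\al}[\at(b_1,b_2)-\at(b_1,a_2)-\at(a_1,b_2)+\at(a_1,a_2)]$, and multiplying by $-d\sin\al/4\pi$ yields formula (\ref{theorem:lk_arctan}) with the stated signs.

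Finally I would dispose of the degenerate conventions. When $d=0$ the segments are coplanar and $\lk=0$ by \Cref{theorem:lk_invariance}(b), matching the convention. When $\al\in\{0,\pi\}$ the segments are parallel, $\sin\al=0$ and the parameterisation degenerates; here the convention $\at=\sign(d)\tfrac{\pi}{2}$ makes all four terms equal, so their alternating sum vanishes, again consistent with $\lk=0$. Since $R\ge d^2>0$, the argument of each $\arctan$ stays finite and $\at(u,\cdot)$ is a genuinely continuous antiderivative on $[a_2,b_2]$, so no branch corrections are needed.
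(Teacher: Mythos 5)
Your proposal is correct and follows essentially the same route as the paper: substitute the parameterisation of \Cref{lemma:segments_parameters} into the Gauss integral to get an elementary double integral with constant numerator, integrate the inner variable after completing the square, and confirm the $\arctan$ antiderivative for the outer variable by direct differentiation (your two ``algebraic coincidences'' are exactly the factorisations carried out in the paper's \Cref{lemma:arctan}). The one genuine difference is that you keep the unnormalised variables $u=a_1+l_1t$, $v=a_2+l_2s$ rather than $p=u/d$, $q=v/d$, so your identity $\partial_v\,\at(u,v;d,\al)=d\sin\al\,(u-v\cos\al)/\big((v^2\sin^2\al+d^2)\sqrt{R}\big)$ holds for either sign of $d$ and you avoid the paper's separate reduction of the case $d<0$ via the central-symmetry \Cref{lemma:central_symmetry} --- a small but real streamlining.
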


$a^2+b^2-2ab\cos\al$ gives the squared third side of the triangle with the first two sides $a,b$ and the angle $\al$ between them, hence is always non-negative.
Also $a^2+b^2-2ab\cos\al=0$ only when the triangle degenerates for $a=\pm b$ and $\cos\al=\pm 1$.
For $\al=0$ or $\al=\pi$ when $L_1,L_2$ are parallel, $\lk(L_1,L_2)=0$ is guaranteed by $\at(a,b;d,\al)=\sign(d)\dfrac{\pi}{2}=0$ when $d=0$ holds in addition to $\al=0$ or $\al=\pi$.
\medskip

The symmetry of the $\at$ function in $a,b$, i.e. $\at(a,b;d,\al)=\at(b,a;d,\al)$ implies that $\lk(L_1,L_2)=\lk(L_2,L_1)$ by~\Cref{theorem:lk_arctan}.
Since the $\at$ function is odd in $d$, i.e. $\at(a,b;-d,\al)=-\at(b,a;d,\al)$,~\Cref{lemma:central_symmetry} is also respected.
\medskip
 
\begin{proof}[of~\Cref{cor:lk_orthogonal_at0}]
By definition any simple orthogonal line segments $L_1,L_2$ have the angle $\al=\dfrac{\pi}{2}$ and initial endpoints $a_1=0=a_2$, hence $b_1=l_1$, $b_2=l_2$.
Substituting the values above into~(\ref{theorem:lk_arctan}) gives
 $\at(0,l_2;d,\frac{\pi}{2})=0$, $\at(l_1,0;d,\frac{\pi}{2})=0$, $\at(0,0;d,\frac{\pi}{2})=0$.
Then $\lk(L_1,L_2)=-\dfrac{1}{4\pi}\at(l_1,l_2;d,\al)=-\dfrac{1}{4\pi}\arctan\left(\dfrac{l_1 l_2}{d\sqrt{l_1^2+l_2^2+d^2}}\right)$.
\bs
\end{proof}

\Cref{fig:AT} shows how the function $\at(a,b;d,\al)$ from~\Cref{theorem:lk_arctan} depends on 2 of 4 parameters when others are fixed.
For example, if $\al=\dfrac{\pi}{2}$, then $\at(a,b;d,\frac{\pi}{2})=\arctan\left(\dfrac{ab}{d\sqrt{a^2+b^2+d^2}}\right)$.
If also $a=b$, the surface $\at(a,a;d,\frac{\pi}{2})=\arctan\left(\dfrac{a^2}{d\sqrt{2a^2+d^2}}\right)$ in the first picture of~\Cref{fig:AT} has the horizontal ridge $\at(0,0;d,\frac{\pi}{2})=0$ and $\lim\limits_{d\to 0}\at(a,a;d,\frac{\pi}{2})=\sign(d)\dfrac{\pi}{2}$ for $a\neq 0$.
If $d,\al$ are free, but $a=0$, then 
$\at(0,0;d,\al)=\arctan\left(\dfrac{d^2\cot\al}{d\sqrt{d^2}}\right)=\sign(d)\arctan(\cot\al)=\sign(d)(\frac{\pi}{2}-\al).$
Similarly, $\lim\limits_{d\to\infty}\at(0,0;d,\al)=\sign(d)(\frac{\pi}{2}-\al)$, see the lines $\at=\frac{\pi}{2}-\al$ on the boundaries of the AT surfaces in the middle pictures of~\Cref{fig:AT}.
\medskip

\sepfigure

\newcommand{\atheight}{38mm}
\begin{figure}[h!]
\includegraphics[height=\atheight]{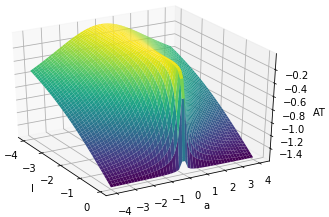}
\includegraphics[height=\atheight]{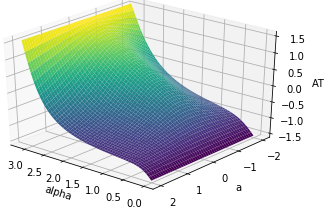}

\includegraphics[height=\atheight]{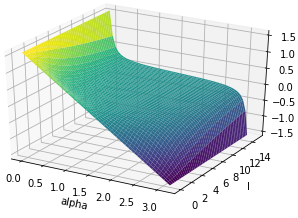}
\hspace*{6mm}
\includegraphics[height=\atheight]{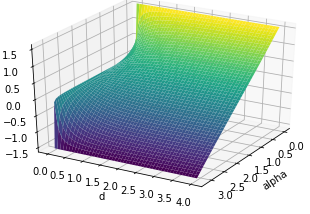}

\includegraphics[height=\atheight]{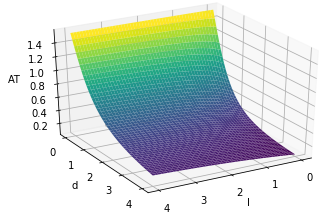}
\includegraphics[height=\atheight]{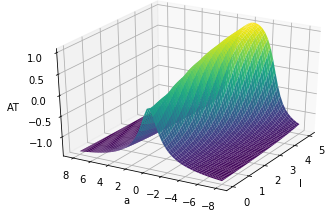}
\caption{The graph of $\at(a,b;d,\al)=\arctan\left(\dfrac{ab\sin\al+d^2\cot\al}{d\sqrt{a^2+b^2-2ab\cos\al+d^2}}\right)$, where 2 of 4 parameters are fixed.
\textbf{Top left}: $l=b-a=0$, $\al=\dfrac{\pi}{2}$.
\textbf{Top right}: $l=d=-1$.
\textbf{Middle left}: $a=0$, $d=1$.
\textbf{Middle right}: $a=0$, $l=1$.
\textbf{Bottom left}: $a=1$, $\al=\dfrac{\pi}{2}$.
\textbf{Bottom right}: $d=-1$, $\al=\dfrac{\pi}{2}$.
}
\label{fig:AT}
\end{figure}

\sepfigure

\begin{lem}[$\lk(L_1,L_2)$ is an integral in $p,q$]
\label{lemma:lk=integral_pq}
In the notations of~\Cref{definition:segments_parameters} we have
$\lk(L_1,L_2)=-\dfrac{1}{4\pi} 
\int\limits_{a_1/d}^{b_1/d} \;
\int\limits_{a_2/d}^{b_2/d}
\dfrac{\sin\al\; dp\; dq}{(1+p^2+q^2-2pq\cos\al)^{3/2}}$ for $d>0$.
\end{lem}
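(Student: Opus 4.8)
The plan is to substitute the explicit parameterisation~(\ref{lemma:segments_parameters}) directly into the Gauss integral of~\Cref{definition:Gauss_integral} and simplify. First I would record the constant velocity vectors $\dot L_1(t)=l_1\big(\cos\frac{\al}{2},-\sin\frac{\al}{2},0\big)$ and $\dot L_2(s)=l_2\big(\cos\frac{\al}{2},\sin\frac{\al}{2},0\big)$, which are independent of $t,s$ because each $L_i$ is affine in its parameter. Their cross product collapses to a purely vertical vector: using the double-angle identity $2\sin\frac{\al}{2}\cos\frac{\al}{2}=\sin\al$, one finds $\dot L_1(t)\times\dot L_2(s)=(0,0,l_1l_2\sin\al)$.

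Next I would introduce the shorthand $u=a_1+l_1 t$ and $v=a_2+l_2 s$, so that $L_1(t)-L_2(s)=\big((u-v)\cos\frac{\al}{2},\,-(u+v)\sin\frac{\al}{2},\,-d\big)$. Dotting this with the vertical cross product annihilates the first two components and leaves the numerator of the integrand equal to $-l_1l_2 d\sin\al$. For the denominator I would expand $|L_1(t)-L_2(s)|^2$ and apply the complementary identity $\cos^2\frac{\al}{2}-\sin^2\frac{\al}{2}=\cos\al$; the cross terms recombine so that $|L_1(t)-L_2(s)|^2=u^2+v^2-2uv\cos\al+d^2$. This is the single non-routine simplification, and it is exactly what produces the quadratic form appearing in the claimed integrand.

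With numerator and denominator in hand, the Gauss integral reads $-\dfrac{l_1l_2 d\sin\al}{4\pi}\iint(u^2+v^2-2uv\cos\al+d^2)^{-3/2}\,dt\,ds$. I would then change variables twice. The substitution $(t,s)\mapsto(u,v)$ has Jacobian $l_1l_2$, which cancels the factor $l_1l_2$ in the numerator and sends the limits to $[a_1,b_1]\times[a_2,b_2]$ since $b_i=a_i+l_i$. Finally the rescaling $p=u/d$, $q=v/d$ contributes a Jacobian $d^2$ and factors $d^3$ out of the three-halves power, so that $\dfrac{-d\cdot d^2}{d^3}=-1$ and the denominator becomes $(1+p^2+q^2-2pq\cos\al)^{3/2}$, with the limits transforming to $[a_1/d,b_1/d]\times[a_2/d,b_2/d]$. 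This yields the stated formula.

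The work here is bookkeeping rather than conceptual: the only real task is to track the several powers of $d$ and the two Jacobians so that they telescope to a single factor $-\sin\al/(4\pi)$. The one place where the sign of $d$ genuinely matters is the last rescaling, where pulling $d^3$ out of $(d^2(\cdots))^{3/2}$ requires $\sqrt{d^2}=d$, i.e. $d>0$; this is precisely why the lemma is restricted to positive distance, the case $d<0$ being recovered afterwards from the central symmetry of~\Cref{lemma:central_symmetry}.
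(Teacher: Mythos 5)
Your proposal is correct and follows essentially the same route as the paper: substitute the parameterisation of \Cref{lemma:segments_parameters} into the Gauss integral, observe that the triple product collapses to the constant $-l_1l_2d\sin\al$, simplify the denominator via the half-angle identities, and rescale to $p=(a_1+l_1t)/d$, $q=(a_2+l_2s)/d$ (you merely split this into two substitutions where the paper does one). Your explicit remark that $\sqrt{d^2}=d$ is the step requiring $d>0$ correctly pinpoints why the lemma is stated for positive signed distance.
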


\begin{proof}[\Cref{lemma:lk=integral_pq}]
The following computations assume that $a_1,a_2,l_1,l_2,\al$ are given and $t,s\in[0,1]$. 

\noindent
$L_1(t) = ( (a_1+l_1 t)\cos\frac{\al}{2}, -(a_1+l_1 t)\sin\al,-\frac{d}{2})$, \\ 
$L_2(s) = ( (a_2+l_2 s)\cos\frac{\al}{2}, (a_2+l_2 s)\sin\al,\frac{d}{2})$, \\
$\dot L_1(t)=(l_1\cos\frac{\al}{2}, -l_1\sin\frac{\al}{2},0)$, \\ 
$\dot L_2(s)=(l_2\cos\frac{\al}{2}, l_2\sin\frac{\al}{2},0)$, \\
$\dot L_1(t)\times\dot L_2(s)=(0,0,2l_1 l_2\sin\frac{\al}{2}\cos\frac{\al}{2})=(0,0,l_1l_2\sin\al)$, \\
$L_1(t)-L_2(s) = ( (a_1-a_2+l_1t-l_2s)\cos\al, -(a_1+a_2+l_1 t+l_2s)\sin\al,-d)$, \\
$(\dot L_1(t),\dot L_2(s),L_1(t)-L_2(s))=-dl_1 l_2\sin\al$, \\
\medskip

\noindent
$\lk(L_1,L_2)=\dfrac{1}{4\pi}\int\limits_{0}^1\int\limits_{0}^1 \dfrac{(\dot L_1(t),\dot L_2(s),L_1(t)-L_2(s))}{|L_1(t)-L_2(s)|^3}dt ds=$ \\
$=\dfrac{1}{4\pi}\int\limits_{0}^1\int\limits_{0}^1 \dfrac{-dl_1 l_2\sin\al\; dt ds}{(d^2+(a_1-a_2+l_1t-l_2s)^2\cos^2\frac{\al}{2}+(a_1+a_2+l_1 t+l_2s)^2\sin^2\frac{\al}{2})^{3/2}}$ \\
$=-\dfrac{dl_1 l_2\sin\al}{4\pi}\int\limits_{0}^1\int\limits_{0}^1 \dfrac{dt ds}{(d^2+(a_1-a_2+l_1t-l_2s)^2\cos^2\frac{\al}{2}+(a_1+a_2+l_1 t+l_2s)^2\sin^2\frac{\al}{2})^{3/2}}$.
\medskip

To simplify the last integral, introduce the variables $p=\dfrac{a_1+l_1 t}{d}$ and $q=\dfrac{a_2+l_2 s}{d}$.
In the new variables $p,q$ the expression under the power $\dfrac{3}{2}$ in the denominator becomes
$$d^2+(pd-qd)^2\cos^2\frac{\al}{2}+(pd+qd)^2\sin^2\frac{\al}{2}=$$
$$=d^2\left(1+(p^2-2pq+q^2)\cos^2\frac{\al}{2}+(p^2-2pq+q^2)\cos^2\frac{\al}{2} \right)
=$$
$$=d^2\left(1+p^2\Big(\cos^2\frac{\al}{2}+\sin^2\frac{\al}{2}\Big)+q^2-2pq\Big(\cos^2\frac{\al}{2}-\sin^2\frac{\al}{2}\Big)\right)
=d^2(1+p^2+q^2-2pq\cos\al).$$ 
The old variables are expressed as $t=\dfrac{pd-a_1}{l_1}$, $ts=\dfrac{pd-a_2}{l_2}$ and have the differentials $dt=\dfrac{d}{l_1}dp$, $ds=\dfrac{d}{l_2}dq$.
Since $t,s\in[0,1]$, the new variables $p,q$ have the ranges $[\frac{a_1}{d},\frac{b_1}{d}]$ and $[\frac{a_2}{d},\frac{b_2}{d}]$, respectively.
Then the linking number has the required expression in the lemmama:
$$\lk(L_1,L_2)=-\dfrac{dl_1 l_2\sin\al}{4\pi}
\int\limits_{a_1/d}^{b_1/d} \;
\int\limits_{a_2/d}^{b_2/d}
\dfrac{d^2}{l_1l_2}\dfrac{dp\; dq}{d^3(1+p^2+q^2-2pq\cos\al)^{3/2}}=$$
$$=-\dfrac{1}{4\pi} 
\int\limits_{a_1/d}^{b_1/d} \;
\int\limits_{a_2/d}^{b_2/d}
\dfrac{\sin\al\; dp\; dq}{(1+p^2+q^2-2pq\cos\al)^{3/2}}.$$
Due to~\Cref{lemma:central_symmetry}, the above computations assume that the signed distance $d>0$.
\bs
\end{proof}

\begin{lem}[the linking number as a single integral]
\label{lemma:lk=integral_p}
In the notations of~\Cref{definition:segments_parameters} we have
$\lk(L_1,L_2)=\dfrac{I(a_2/d)-I(b_2/d)}{4\pi}$, where the function $I(r)$ is defined as the single integral
$I(r)=\int\limits_{a_1/d}^{b_1/d} 
\dfrac{\sin\al(r-p\cos\al) dp}{(1+p^2\sin^2\al)\sqrt{1+p^2+r^2-2pr\cos\al}}$ for $d>0$.
\end{lem}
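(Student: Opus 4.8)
The plan is to start from the double integral established in~\Cref{lemma:lk=integral_pq} and carry out the inner integration over $q$ for each fixed $p$, thereby collapsing the double integral into a single integral over $p$ whose integrand will coincide with the integrand of $I(r)$ evaluated at the two limits $r=a_2/d$ and $r=b_2/d$. The crucial observation is that, for fixed $p$, the radicand $1+p^2+q^2-2pq\cos\al$ is a quadratic in $q$ that can be brought into the standard shape $u^2+c^2$ by completing the square.

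First I would complete the square in $q$, writing
$$1+p^2+q^2-2pq\cos\al=(q-p\cos\al)^2+(1+p^2\sin^2\al),$$
which uses only $1-\cos^2\al=\sin^2\al$. Setting $u=q-p\cos\al$ and $c^2=1+p^2\sin^2\al>0$, the inner integral in $q$ becomes the standard one
$$\int\frac{du}{(u^2+c^2)^{3/2}}=\frac{u}{c^2\sqrt{u^2+c^2}},$$
whose correctness is verified by differentiation. Substituting $u$ and $c^2$ back and restoring the original radicand identifies the antiderivative of the $q$-integrand as
$$F(q)=\frac{q-p\cos\al}{(1+p^2\sin^2\al)\sqrt{1+p^2+q^2-2pq\cos\al}}.$$

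Next I would evaluate $\sin\al\,F(q)$ between $q=a_2/d$ and $q=b_2/d$, giving $\sin\al\big(F(b_2/d)-F(a_2/d)\big)$, and then integrate over $p\in[a_1/d,b_1/d]$. The point is that $\sin\al\,F(q)$ with $q$ replaced by a generic value $r$ is exactly the integrand defining $I(r)$, so the $p$-integral produces $I(b_2/d)-I(a_2/d)$. Combining this with the leading factor $-\tfrac{1}{4\pi}$ inherited from~\Cref{lemma:lk=integral_pq} flips the difference and yields $\lk(L_1,L_2)=\tfrac{1}{4\pi}\big(I(a_2/d)-I(b_2/d)\big)$, as claimed; the hypothesis $d>0$ carries over unchanged from~\Cref{lemma:lk=integral_pq}.

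There is no substantive obstacle here, since the whole argument reduces to one application of a standard quadratic-denominator integral; the only steps requiring care are purely clerical. Specifically, I would double-check that completing the square produces precisely the constant $1+p^2\sin^2\al$, so that it matches the factor $(1+p^2\sin^2\al)$ appearing in the denominator of $I(r)$, and I would track the order of evaluation of the antiderivative at the limits: evaluating at the upper limit $b_2/d$ before the lower limit $a_2/d$ gives $I(b_2/d)-I(a_2/d)$, which the external minus sign then converts into the stated $I(a_2/d)-I(b_2/d)$.
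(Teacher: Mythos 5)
Your proposal is correct and follows essentially the same route as the paper: both complete the square in $q$ to get $(q-p\cos\al)^2+(1+p^2\sin^2\al)$ and then evaluate the inner integral in closed form, leaving the single integral $I$ evaluated at the two limits with the sign flipped by the $-\tfrac{1}{4\pi}$ prefactor. The only difference is that the paper derives the antiderivative via the substitution $q-p\cos\al=\sqrt{1+p^2\sin^2\al}\,\tan\psi$, whereas you quote the standard form $\int(u^2+c^2)^{-3/2}du=\dfrac{u}{c^2\sqrt{u^2+c^2}}$ directly, which is a purely presentational shortcut.
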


\begin{proof}[\Cref{lemma:lk=integral_p}]
Complete the square in the expression under power $\dfrac{3}{2}$ in~\Cref{lemma:lk=integral_pq}:
$$1+p^2+q^2-2pq\cos\al=1+p^2\sin^2\al+(q-p\cos\al)^2.$$
The substitution $(q-p\cos\al)=(1+p^2\sin^2\al)\tan^2\psi$ for the new variable $\psi$ simplifies the sum of squares to $1+\tan^2\psi=\dfrac{1}{\cos^2\psi}$.
Since $q$ varies within $[\frac{a_2}{d},\frac{b_2}{d}]$, for any fixed $p\in[\frac{a_1}{d},\frac{b_1}{d}]$, the range $[\psi_0,\psi_1]$ of $\psi$ satisfies $\tan\psi_0=\dfrac{\frac{a_2}{d}-p\cos\al}{\sqrt{1+p^2\sin^2\al}}$ and $\tan\psi_1=\dfrac{\frac{b_2}{d}-p\cos\al}{\sqrt{1+p^2\sin^2\al}}$.
Since we treat $p,\psi$ as independent variables, the Jacobian of the substitution $(p,q)\mapsto(p,\psi)$ equals 
$$\dfrac{\bd q}{\bd\psi}=\dfrac{\bd}{\bd\psi}\left(p\cos\al+\tan\psi\sqrt{1+p^2\sin^2\al}\right)=\dfrac{\sqrt{1+p^2\sin^2\al}}{\cos^2\psi}.$$
In the variables $p,\psi$ the expression under the double integral of~\Cref{lemma:lk=integral_pq} becomes
$$
\dfrac{\sin\al\; dp\; dq}{(1+p^2+q^2-2pq\cos\al)^{3/2}}
=\dfrac{\sin\al\; dp}{((1+p^2\sin^2\al)+(1+p^2\sin^2\al)\tan^2\psi)^{3/2}}\dfrac{\bd q}{\bd\psi}d\psi$$
$$=\dfrac{\sin\al\; dp}{(1+p^2\sin^2\al)^{3/2}(1+\tan^2\psi)^{3/2}}\dfrac{d\psi\sqrt{1+p^2\sin^2\al}}{\cos^2\psi}
=\dfrac{\sin\al\; dp \cos\psi\; d\psi}{1+p^2\sin^2\al}
.$$
$$\lk(L_1,L_2)=-\dfrac{1}{4\pi}
\int\limits_{a_1/d}^{b_1/d} 
\dfrac{\sin\al\; dp }{1+p^2\sin^2\al}
\int\limits_{\psi_0}^{\psi_1} \cos\psi\; d\psi
=\dfrac{1}{4\pi}
\int\limits_{a_1/d}^{b_1/d} 
\dfrac{\sin\al\; dp }{1+p^2\sin^2\al}(\sin\psi_0-\sin\psi_1).$$
Express the sin functions for the bounds $\psi_0,\psi_1$ in terms of tan
as $\sin\psi_0=\dfrac{\tan\psi_0}{\sqrt{1+\tan^2\psi_0}}$.
Using $\tan\psi_0=\dfrac{\frac{a_2}{d}-p\cos\al}{\sqrt{1+p^2\sin^2\al}}$ obtained above, we get
$$\sqrt{1+\tan^2\psi_0}=\sqrt{\dfrac{(1+p^2\sin^2\al)+(\frac{a_2}{d}-p\cos\al)^2}{1+p^2\sin^2\al}}
=\sqrt{\dfrac{1+p^2+(\frac{a_2}{d})^2-2\frac{a_2}{d}p\cos\al}{1+p^2\sin^2\al}}.$$
$$\sin\psi_0=\dfrac{\frac{a_2}{d}-p\cos\al}{\sqrt{1+p^2\sin^2\al}}
\sqrt{\dfrac{1+p^2\sin^2\al}{1+p^2+(\frac{a_2}{d})^2-2\frac{a_2}{d}p\cos\al}}=\dfrac{\frac{a_2}{d}-p\cos\al}{\sqrt{1+p^2+(\frac{a_2}{d})^2-2\frac{a_2}{d}p\cos\al}}.$$
Then $\sin\psi_1$ has the same expression with $a_2$ replaced by $b_2$.
After substituting these expressions in the previous formula for the linking number, we get
$\lk(L_1,L_2)=$
$$\dfrac{1}{4\pi}\int\limits_{a_1/d}^{b_1/d} 
\dfrac{\sin\al\; dp}{1+p^2\sin^2\al}\left(
\dfrac{\frac{a_2}{d}-p\cos\al}{\sqrt{1+p^2+(\frac{a_2}{d})^2-2\frac{a_2}{d}p\cos\al}}
-\dfrac{\frac{b_2}{d}-p\cos\al}{\sqrt{1+p^2+(\frac{b_2}{d})^2-2\frac{b_2}{d}p\cos\al}}\right)$$
$=\dfrac{S(a_2/d)-S(b_2/d)}{4\pi}$, where 
$I(r)=\int\limits_{a_1/d}^{b_1/d} 
\dfrac{\sin\al(r-p\cos\al) dp}{(1+p^2\sin^2\al)\sqrt{1+p^2+r^2-2pr\cos\al}}$.
\bs
\end{proof}

\begin{lem}[$I(r)$ via arctan]
\label{lemma:arctan}
The integral $I(r)$ in~\Cref{lemma:lk=integral_p} can be found as
$$\int\dfrac{\sin\al(r-p\cos\al) dp}{(1+p^2\sin^2\al)\sqrt{1+p^2+r^2-2pr\cos\al}}
=\arctan\dfrac{pr\sin\al+\cot\al}{\sqrt{1+p^2+r^2-2pr\cos\al}}
+C.$$
\end{lem}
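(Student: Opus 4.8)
The plan is to prove the stated antiderivative by differentiation: since the claim is an indefinite integral, it suffices to check that the derivative of the right-hand side equals the integrand. Write $R=R(p)=1+p^2+r^2-2pr\cos\al$ for the quantity under the square root and $N=N(p)=pr\sin\al+\cot\al$ for the numerator inside the $\arctan$, so that the proposed primitive is $F(p)=\arctan\left(\dfrac{N}{\sqrt R}\right)$. Note that $R=1+(p-r\cos\al)^2+r^2\sin^2\al\ge 1>0$, so $\sqrt R$ is smooth and $F$ is well-defined for all $p$. Using $N'=r\sin\al$ and $R'=2(p-r\cos\al)$, the chain and quotient rules give $F'=\dfrac{N'R-\frac{1}{2}NR'}{\sqrt R\,(R+N^2)}$. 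Since the integrand also carries $\sqrt R$ in its denominator, the whole lemma reduces to the single algebraic identity $\dfrac{N'R-\frac{1}{2}NR'}{R+N^2}=\dfrac{\sin\al\,(r-p\cos\al)}{1+p^2\sin^2\al}$.

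Everything then rests on two elementary simplifications, and the first is the step I expect to be the crux. Expanding $N^2=p^2r^2\sin^2\al+2pr\cos\al+\cot^2\al$ (using $\sin\al\cot\al=\cos\al$) and adding $R$, the cross terms $\pm 2pr\cos\al$ cancel, and with $1+\cot^2\al=\csc^2\al$ one reaches the clean factorization $R+N^2=\dfrac{(1+p^2\sin^2\al)(1+r^2\sin^2\al)}{\sin^2\al}$, which is most transparent after multiplying through by $\sin^2\al$. This is precisely what manufactures the factor $1+p^2\sin^2\al$ appearing in the integrand; spotting and verifying this factorization is the only non-routine part of the argument.

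The second simplification handles the numerator. Expanding $N'R-\frac{1}{2}NR'=r\sin\al\,R-(pr\sin\al+\cot\al)(p-r\cos\al)$, the two $p^2r\sin\al$ terms cancel; collecting what remains and applying $\sin\al+\cot\al\cos\al=\csc\al$ (equivalently $\sin^2\al+\cos^2\al=1$) yields $N'R-\frac{1}{2}NR'=\dfrac{(r-p\cos\al)(1+r^2\sin^2\al)}{\sin\al}$. Dividing this by the factorization of $R+N^2$ from the previous paragraph, the factors $1+r^2\sin^2\al$ and one power of $\sin\al$ cancel, leaving exactly $\dfrac{\sin\al\,(r-p\cos\al)}{1+p^2\sin^2\al}$. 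Hence $F'(p)$ equals the integrand, which proves the lemma up to the additive constant $C$. The computation is entirely algebraic once the factorization of $R+N^2$ is in hand, so no genuine integration technique is needed beyond recognizing the candidate primitive.
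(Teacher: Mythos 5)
Your proposal is correct and follows essentially the same route as the paper: both verify the claimed primitive by differentiating $\arctan$ of the given quotient and reducing everything to the factorization $\sin^2\al\,(R+N^2)=(1+p^2\sin^2\al)(1+r^2\sin^2\al)$ together with the numerator identity $N'R-\tfrac{1}{2}NR'=(r-p\cos\al)(1+r^2\sin^2\al)/\sin\al$. The only cosmetic difference is that the paper writes the argument of $\arctan$ with numerator $pr\sin^2\al+\cos\al$ over $\sin\al\sqrt{R}$, which is the same expression after clearing $\sin\al$.
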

\medskip

\begin{proof}
The easy way is to differentiate $\arctan\om$ for 
$\om=\dfrac{pr\sin^2\al+\cos\al}{\sin\al\sqrt{1+p^2+r^2-2pr\cos\al}}$ with respect to the variable $p$ remembering that $r,\al$ are fixed  parameters.
For notational clarity, we use an auxiliary symbol for the expression under the square root: $R=1+p^2+r^2-2pr\cos\al$.
Then $\om=\dfrac{pr\sin^2\al+\cos\al}{\sin\al\sqrt{R}}$ and 
$$\dfrac{d\om}{dp}=\dfrac{1}{R\sin\al}\left(r\sin^2\al\sqrt{R}-(rp\sin^2\al+\cos\al)\dfrac{2p-2r\cos\al}{2\sqrt{R}}\right)=$$
$$=\dfrac{1}{R\sqrt{R}\sin\al}\Big(r\sin^2\al(1+p^2+r^2-2pr\cos\al)
-(rp\sin^2\al+\cos\al)(p-r\cos\al)\Big)=$$
$$\dfrac{rp^2\sin^2\al+r^3\sin^2\al-2pr^2\cos\al\sin^2\al-rp^2\sin^2\al+pr^2\cos\al\sin^2\al-p\cos\al+r}{R\sqrt{R}\sin\al}$$
$$=\dfrac{r^3\sin^2\al-pr^2\cos\al\sin^2\al-p\cos\al+r}{R\sqrt{R}\sin\al}
=\dfrac{(r-p\cos\al)(1+r^2\sin^2\al)}{R\sqrt{R}\sin\al}
.$$
$$\dfrac{d}{dp}\arctan\om=\dfrac{1}{1+\om^2}\cdot\dfrac{d\om}{dp}
=\dfrac{(\sin\al\sqrt{R})^2}{(\sin\al\sqrt{R})^2+(pr\sin^2\al+\cos\al)^2}\cdot\dfrac{d\om}{dp}=$$
$$=\dfrac{R\sin^2\al}{R\sin^2\al+(p^2r^2\sin^4\al+2pr\sin^2\al\cos\al+\cos^2\al)}\cdot
\dfrac{(r-p\cos\al)(1+r^2\sin^2\al)}{R\sqrt{R}\sin\al}=$$
$$=\dfrac{\sin\al}{\sqrt{R}}\cdot\dfrac{(r-p\cos\al)(1+r^2\sin^2\al)}{\sin^2\al(1+p^2+r^2-2pr\cos\al)+(p^2r^2\sin^4\al+2pr\sin^2\al\cos\al+\cos^2\al)}=$$
$$=\dfrac{\sin\al(r-p\cos\al)(1+r^2\sin^2\al)}{(1+p^2\sin^2\al+r^2\sin^2\al+p^2r^2\sin^4\al)\sqrt{R}}
=\dfrac{\sin\al(r-p\cos\al)(1+r^2\sin^2\al)}{(1+p^2\sin^2\al)(1+r^2\sin^2\al)\sqrt{R}}=$$
$$
=\dfrac{\sin\al(r-p\cos\al)}{(1+p^2\sin^2\al)\sqrt{R}}
=\dfrac{\sin\al(r-p\cos\al)}{(1+p^2\sin^2\al)\sqrt{1+p^2+q^2-2pq\cos\al}}.$$
Since we got the required expression under the integral $I(r)$,~\Cref{lemma:arctan} is proved.
\bs
\end{proof}

\begin{proof}[\Cref{theorem:lk_arctan}]
Consider the right hand side of the equation in~\Cref{lemma:arctan} as the 3-variable function
$F(p,r;\al)=\arctan\left(\dfrac{pr\sin^2\al+\cos\al}{\sqrt{1+p^2+r^2-2pr\cos\al}}\right)$.
The function in~\Cref{lemma:lk=integral_p} is
$I(r)=F(b_1/d,r;\al)-F(a_1/d,r;\al)$.
By~\Cref{lemma:lk=integral_p} $\lk(L_1,L_2)
=$
$$\dfrac{\Big(F(b_1/d,a_2/d;\al)-F(a_1/d,a_2/d;\al)\Big)-\Big(F(b_1/d,b_2/d;\al)-F(a_1/d,b_2/d;\al)\Big)}{4\pi}.$$
Rewrite a typical function from the numerator above as follows:
$F\left(a/d,b/d;\al\right)=$
$$\arctan\dfrac{(ab/d^2)\sin^2\al+\cos\al}{\sqrt{1+(a/d)^2+(b/d)^2-2(ab/d^2)\cos\al}}
=\arctan\dfrac{ab\sin\al+d^2\cot\al}{d\sqrt{a^2+b^2-2ab\cos\al+d^2}}.$$
If we denote the last expression as $\at(a,b;d,\al)$, required formula~(\ref{theorem:lk_arctan}) follows.
\medskip

In~\Cref{lemma:lk=integral_pq}, \Cref{lemma:lk=integral_p} and above we have used that the signed distance $d$ is positive.
By~\Cref{lemma:central_symmetry} the signed distance $d$ and $\lk(L_1,L_2)$ simultaneously change their signs under a central symmetry, while all other invariants remain the same.
Since $\at(a,b;-d,\al)=-\at(a,b;d,\al)$ due to the arcsin function being odd, formula~(\ref{theorem:lk_arctan}) holds for $d<0$.
The formula remains valid even for $d=0$, when $L_1,L_2$ are in the same plane.
The expected value $\lk(L_1,L_2)=0$ needs an explicit setting, see the discussion of the linking number discontinuity around $d=0$ in~\Cref{cor:lk_d->0}.
\bs
\end{proof}

\section{The asymptotic behaviour of the linking number of segments}
\label{sec:behaviour}

This section discusses how the linking number $\lk(L_1,L_2)$ in~\Cref{theorem:lk_arctan} behaves with respect to the six parameters of line segments $L_1,L_2$.
~\Cref{fig:lk} shows how the linking number between two equal line segments varies with different pairs of parameters.  
\smallskip

\sepfigure

\begin{figure}[h!]
\includegraphics[height=\atheight]{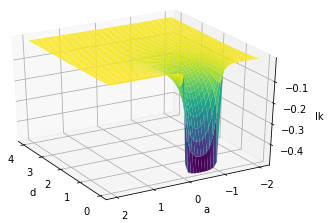}
\includegraphics[height=\atheight]{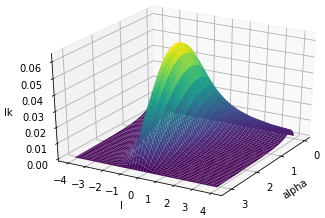}

\includegraphics[height=\atheight]{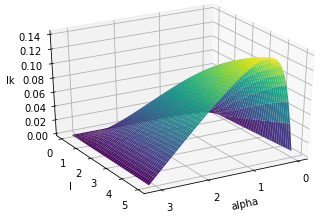}
\hspace*{4mm}
\includegraphics[height=\atheight]{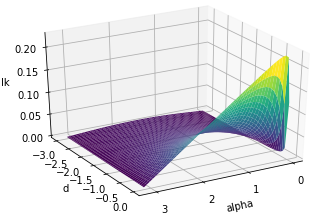}

\includegraphics[height=\atheight]{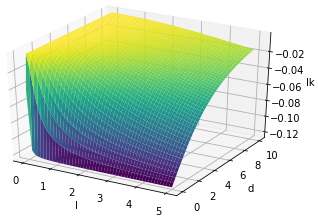}
\hspace*{6mm}
\includegraphics[height=\atheight]{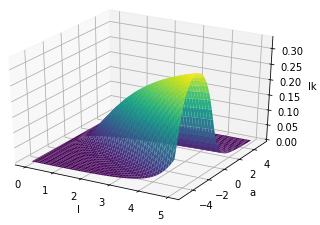}
\caption{The linking number $\lk(a,a+l;a,a+l;d,\al)$ from formula (\ref{theorem:lk_arctan}), where 2 of 4 parameters are fixed.
\textbf{Top left}: $l=1$, $\al=\dfrac{\pi}{2}$.
\textbf{Top right}: $l=1,d=-1$.
\textbf{Middle left}: $a=0$, $d=1$.
\textbf{Middle right}: $a=0$, $l=1$.
\textbf{Bottom left}: $a=0$, $\al=\dfrac{\pi}{2}$.
\textbf{Bottom right}: $d=-1$, $\al=\dfrac{\pi}{2}$.
}
\label{fig:lk}
\end{figure}

\sepfigure

\begin{crl}[bounds of the linking number]
\label{cor:lk_bounds}
For any line segments $L_1,L_2\subset\R^3$, the linking number $\lk(L_1,L_2)$ is between $\pm\dfrac{1}{2}$.
\end{crl}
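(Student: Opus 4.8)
The plan is to read the bound straight off formula~(\ref{theorem:lk_arctan}), exploiting that each $\at$ term is a single arctangent and hence confined to a bounded interval. First I would recall that for every admissible choice of arguments the value $\at(a,b;d,\al)=\arctan(\cdots)$ lies in the open interval $\left(-\frac{\pi}{2},\frac{\pi}{2}\right)$, since $\arctan:\R\to\left(-\frac{\pi}{2},\frac{\pi}{2}\right)$ is bounded with these limits.

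The numerator of formula~(\ref{theorem:lk_arctan}) is the combination $\at(a_1,b_2;d,\al)+\at(b_1,a_2;d,\al)-\at(a_1,a_2;d,\al)-\at(b_1,b_2;d,\al)$. Grouping the two added terms together and the two subtracted terms together, each group lies in $(-\pi,\pi)$, so the whole numerator lies in $(-2\pi,2\pi)$. Dividing by $4\pi$ gives $|\lk(L_1,L_2)|<\frac{1}{2}$, which is exactly the claimed bound.

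It then remains to dispose of the degenerate configurations excluded from the generic computation. When $d=0$ we set $\lk(L_1,L_2)=0$ by the convention fixed in~\Cref{theorem:lk_arctan}, so the bound holds trivially. When $\al\in\{0,\pi\}$ the segments are parallel and each $\at$ term equals $\sign(d)\frac{\pi}{2}$; the two positive and two negative contributions then cancel in pairs, again giving $\lk(L_1,L_2)=0$ within the bound.

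The argument meets no genuine obstacle, since the estimate is a direct consequence of the range of $\arctan$; the work done by the earlier lemmas is entirely contained in establishing formula~(\ref{theorem:lk_arctan}) itself. The only subtlety worth flagging is that the bound obtained this way is strict and almost certainly not sharp: saturating the numerator near $2\pi$ would force the four arctangent arguments to tend to $+\infty,+\infty,-\infty,-\infty$ simultaneously, and I expect the geometric constraints relating the invariants $a_1,b_1,a_2,b_2,d,\al$ to forbid this, so the true supremum of $|\lk(L_1,L_2)|$ should lie strictly below $\frac{1}{2}$ (compare the tighter $\frac{1}{8}$ bound for simple orthogonal segments in~\Cref{cor:lk_orthogonal_at0}).
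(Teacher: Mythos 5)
Your argument is correct and is essentially the paper's own proof: bound each of the four $\at$ terms by the range $\left(-\frac{\pi}{2},\frac{\pi}{2}\right)$ of $\arctan$, sum, and divide by $4\pi$. The extra care you take with the degenerate cases $d=0$ and $\al\in\{0,\pi\}$ and the remark on non-sharpness are welcome but not needed beyond what the paper states.
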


\begin{proof}[\Cref{cor:lk_bounds}]
By~\Cref{theorem:lk_arctan} $\lk(L_1,L_2)$ is a sum of 4 arctan functions divided by $4\pi$.
Since each arctan is strictly between $\pm\dfrac{\pi}{2}$, the linking number is between $\pm\dfrac{1}{2}$.
\bs
\end{proof}

\begin{crl}[sign of the linking number]
\label{cor:lk_sign}
In the notation of~\Cref{definition:segments_parameters}, 
we have
$\lim\limits_{\al\to 0} \lk(L_1,L_2)=0=\lim\limits_{\al\to\pi} \lk(L_1,L_2)$.
Any non-parallel $L_1,L_2$ have $\sign(\lk(L_1,L_2))=-\sign(d)$.
So $\lk(L_1,L_2)=0$ if and only if $d=0$ or $\al=0$ or $\al=\pi$.
\end{crl}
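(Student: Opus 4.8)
The plan is to establish the three assertions separately, using the closed form of~\Cref{theorem:lk_arctan} for the angular limits and the integral representation of~\Cref{lemma:lk=integral_pq} for the sign, and then to assemble them into the stated equivalence.

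First I would handle the limits as $\al\to 0$ and $\al\to\pi$. If $d=0$, the linking number is identically $0$ by the convention of~\Cref{theorem:lk_arctan}, so the limits are trivial; hence assume $d\neq 0$. In the argument of each $\at(a,b;d,\al)$ the term $d^2\cot\al$ dominates as $\al\to 0^+$, since $\cot\al\to+\infty$ while $ab\sin\al\to 0$ and the denominator tends to the finite nonzero value $d\sqrt{(a-b)^2+d^2}$. Thus the argument tends to $\sign(d)\cdot(+\infty)$ and every one of the four $\at$ terms tends to the common value $\sign(d)\frac{\pi}{2}$. Because formula~(\ref{theorem:lk_arctan}) combines the four terms with signs $+,+,-,-$, these common limits cancel and $\lk(L_1,L_2)\to 0$. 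The case $\al\to\pi^-$ is identical, with $\cot\al\to-\infty$ giving the common limit $-\sign(d)\frac{\pi}{2}$ and again a complete cancellation.

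The main step is the sign of $\lk(L_1,L_2)$ for $\al\in(0,\pi)$ and $d\neq 0$, and here I would work from~\Cref{lemma:lk=integral_pq} rather than the arctan formula. For $d>0$, completing the square gives $1+p^2+q^2-2pq\cos\al = 1 + (p-q\cos\al)^2 + q^2\sin^2\al \geq 1 > 0$, so the integrand $\dfrac{\sin\al}{(1+p^2+q^2-2pq\cos\al)^{3/2}}$ is continuous and strictly positive throughout, since $\sin\al>0$ on $(0,\pi)$. Moreover the rectangle of integration $[\frac{a_1}{d},\frac{b_1}{d}]\times[\frac{a_2}{d},\frac{b_2}{d}]$ has positive side lengths $\frac{l_1}{d},\frac{l_2}{d}>0$, hence positive area. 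Therefore the double integral is strictly positive and $\lk(L_1,L_2)<0$, i.e. $\sign(\lk(L_1,L_2))=-1=-\sign(d)$. For $d<0$ I would invoke~\Cref{lemma:central_symmetry}: applying the central symmetry $\cs$ replaces $d$ by $-d>0$ and $\lk$ by $-\lk$ while leaving $\al$ fixed, so the $d>0$ case forces $\lk(L_1,L_2)>0$ and again $\sign(\lk(L_1,L_2))=-\sign(d)$.

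Finally the equivalence follows by combining the pieces: if $d=0$ then $\lk=0$ by convention, and if $\al=0$ or $\al=\pi$ then $\lk=0$ by the limit computation above; conversely, if $d\neq 0$ and $\al\in(0,\pi)$, the sign argument gives $\sign(\lk(L_1,L_2))=-\sign(d)\neq 0$, so $\lk(L_1,L_2)\neq 0$. I expect the positivity of the integrand in~\Cref{lemma:lk=integral_pq} to be the crux of the argument: it is precisely where the restriction $\al\in(0,\pi)$, through $\sin\al>0$, and the orientation of the integration limits, through $l_1,l_2>0$ with $d>0$, jointly force a definite sign that is not at all transparent from the four-term arctan formula by itself.
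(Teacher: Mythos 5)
Your proposal is correct and follows essentially the same route as the paper: the angular limits are obtained from the cancellation of the four $\at$ terms (each tending to $\sign(d)\frac{\pi}{2}$ as $\al\to 0$ or $\al\to\pi$), and the sign is derived from the strict positivity of the integrand in~\Cref{lemma:lk=integral_pq} for $d>0$ together with~\Cref{lemma:central_symmetry} for $d<0$. Your completion of the square to bound the denominator below by $1$ is a slightly more explicit justification of positivity than the paper gives, but the argument is the same.
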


\begin{proof}[\Cref{cor:lk_sign}]
If $\al=0$ or $\al=\pi$, then $\cot\al$ is undefined, so~\Cref{theorem:lk_arctan} sets $\at(a,b;d,\al)=\sign(d)\dfrac{\pi}{2}$.
Then $\lk(L_1,L_2)=\sign(d)\dfrac{\pi}{2}(1+1-1-1)=0$. 
\smallskip

~\Cref{theorem:lk_arctan} also specifies that $\lk(L_1,L_2)=0$ for $d=0$.
If $d\neq 0$ and $\al\to 0$ within $[0,\pi]$ while all other parameters remain fixed, then $d^2\cot\al\to+\infty$.
Hence each of the 4 arctan functions in~\Cref{theorem:lk_arctan} approaches $\dfrac{\pi}{2}$, so $\lk(L_1,L_2)\to 0$.
The same conclusion similarly follows in the case $\al\to\pi$ when $d^2\cot\al\to-\infty$.
\smallskip

If $L_1,L_2$ are not parallel, the angle $\al$ between them belongs to $(0,\pi)$. 
In $d>0$,~\Cref{lemma:lk=integral_pq} says that
$\lk(L_1,L_2)=-\dfrac{1}{4\pi} 
\int\limits_{a_1/d}^{b_1/d} \;
\int\limits_{a_2/d}^{b_2/d}
\dfrac{\sin\al\; dp\; dq}{(1+p^2+q^2-2pq\cos\al)^{3/2}}$.
Since the function under the integral is strictly positive, $\lk(L_1,L_2)<0$.
By~\Cref{lemma:central_symmetry} both $\lk(L_1,L_2)$ simultaneously change their signs under a central symmetry.
Hence the formula $\sign(\lk(L_1,L_2))=-\sign(d)$ holds for all $d$ including $d=0$ above.
\bs
\end{proof}

\begin{crl}[lk for $d\to 0$]
\label{cor:lk_d->0}
If $d\to 0$ and $L_1,L_2$ remain disjoint, formula~(\ref{theorem:lk_arctan}) is continuous and $\lim\limits_{d\to 0} \lk(L_1,L_2)=0$.
If $d\to 0$ and $L_1,L_2$ intersect each other in the limit case $d=0$, then $\lim\limits_{d\to 0} \lk(L_1,L_2)=-\dfrac{\sign(d)}{2}$, where $d\to 0$ keeps its sign.
\end{crl}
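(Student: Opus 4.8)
The plan is to take the limit $d\to 0$ directly inside the four-arctan formula (\ref{theorem:lk_arctan}), so that everything reduces to the elementary limit of a single $\at$ term. Fix $\al\in(0,\pi)$, so that $\sin\al>0$ and $a^2+b^2-2ab\cos\al=0$ only at $a=b=0$. For fixed $a,b$ with $(a,b)\neq(0,0)$, the numerator $ab\sin\al+d^2\cot\al\to ab\sin\al$ while the denominator $d\sqrt{a^2+b^2-2ab\cos\al+d^2}$ has sign $\sign(d)$ and magnitude tending to $0$; hence the argument of the arctan blows up with sign $\sign(d)\sign(ab)$ and
\[
\lim_{d\to 0}\at(a,b;d,\al)=\tfrac{\pi}{2}\,\sign(d)\,\sign(ab),\qquad (a,b)\neq(0,0),
\]
where $\sign(ab)=\sign(a)\sign(b)$; in particular the limit is $0$ as soon as exactly one of $a,b$ vanishes. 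The single exceptional case is $a=b=0$, where instead $\at\to\arctan(\cot\al)=\tfrac{\pi}{2}-\al$.

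Next I would substitute these limits into (\ref{theorem:lk_arctan}). Away from the exceptional case the four terms combine and factorise:
\[
\lim_{d\to 0}4\pi\,\lk(L_1,L_2)=\tfrac{\pi}{2}\,\sign(d)\big(\sign(a_1)-\sign(b_1)\big)\big(\sign(b_2)-\sign(a_2)\big).
\]
Because $l_i=b_i-a_i>0$, the factor $\sign(a_1)-\sign(b_1)$ equals $0$ when $a_1,b_1$ share a sign (the limiting segment $L_1$ misses the origin) and equals $-2$ when $a_1<0<b_1$ (the origin lies in the interior of $L_1$); symmetrically $\sign(b_2)-\sign(a_2)\in\{0,2\}$. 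Since the limiting lines $\bar L_1,\bar L_2$ lie in $\{z=0\}$ and meet only at the origin, $L_1$ and $L_2$ intersect in the limit precisely when both contain the origin.

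This splits into the two asserted cases. In the disjoint case at least one factor vanishes, so $\lim_{d\to 0}\lk(L_1,L_2)=0$; as \Cref{theorem:lk_arctan} already assigns $\lk=0$ at $d=0$, the two-sided limit exists and the formula is continuous there. In the transversal crossing case both factors are nonzero, their product is $(-2)(2)=-4$, and $\lim_{d\to 0}\lk(L_1,L_2)=-\tfrac{\sign(d)}{2}$, the one-sided value recording $\sign(d)$ as claimed; the same sign change is also forced abstractly by \Cref{lemma:central_symmetry}. As an independent check of the crossing value I would use \Cref{lemma:lk=integral_pq}: for $d>0$ with the origin interior to both segments the integration limits satisfy $a_i/d\to-\infty$ and $b_i/d\to+\infty$, so by monotone convergence (the integrand is positive and the box grows to $\R^2$) the double integral tends to $\int_{\R^2}\frac{\sin\al\,dp\,dq}{(1+p^2+q^2-2pq\cos\al)^{3/2}}$, which, after diagonalising the quadratic form, evaluates to $2\pi$ independently of $\al$; hence $\lk\to-\tfrac1{4\pi}\cdot 2\pi=-\tfrac12$.

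The main obstacle is the bookkeeping in the degenerate configurations where an endpoint of $L_1$ and an endpoint of $L_2$ both fall on the origin, producing a term $\at(0,0;d,\al)$ whose limit is $\tfrac{\pi}{2}-\al$ rather than $0$ and breaking the clean factorisation above. Geometrically these are exactly the non-transversal intersections in which the two segments merely touch at a shared endpoint, and there the limit is in general neither $0$ nor $-\tfrac{\sign(d)}{2}$. I would therefore state the intersecting case for a transversal crossing at an interior point of each segment, which is the generic situation the corollary intends and the one giving the universal value $-\tfrac{\sign(d)}{2}$.
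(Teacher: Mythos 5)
Your argument is correct and is essentially the paper's own proof: both take the limit $d\to 0$ termwise in formula~(\ref{theorem:lk_arctan}), obtain $\lim_{d\to 0}\at(a,b;d,\al)=\frac{\pi}{2}\sign(d)\sign(a)\sign(b)$, and read off the two cases from the factorised product $\frac{\sign(d)}{8}(\sign(a_1)-\sign(b_1))(\sign(b_2)-\sign(a_2))$. Your extra care with the degenerate configuration $a=b=0$ (where $\at\to\sign(d)(\frac{\pi}{2}-\al)$ and the factorisation breaks, corresponding to segments touching at shared endpoints in the limit) addresses a case the paper's proof silently passes over, and your independent verification of the crossing value via the full-plane integral in \Cref{lemma:lk=integral_pq} is a sound but optional cross-check.
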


\begin{proof}[\Cref{cor:lk_d->0}]
Recall that $\lim\limits_{x\to\pm\infty}\arctan x=\pm\dfrac{\pi}{2}$.
By~\Cref{cor:lk_sign} assume that $\al\neq 0,\al\neq\pi$, so $\al\in(0,\pi)$.
Then $\sin\al>0$, $a^2+b^2-2ab\cos\al>(a-b)^2\geq 0$ and
$$\lim\limits_{d\to 0}\at(a,b;d,\al)=\lim\limits_{d\to 0}\arctan\left(\dfrac{ab\sin\al+d^2\cot\al}{d\sqrt{a^2+b^2-2ab\cos\al+d^2}}\right)=$$ 
$$=\sign(a)\sign(b)\sign(d)\dfrac{\pi}{2},\text{so \Cref{theorem:lk_arctan} gives }$$
$$\lim\limits_{d\to 0}\lk(L_1,L_2)=\dfrac{\sign(d)}{8}(\sign(a_1)-\sign(b_1))(\sign(b_2)-\sign(a_2)).$$
In the limit case $d=0$, the line segments $L_1,L_2\subset\{z=0\}$ remain disjoint in the same plane if and only if both endpoint coordinates $a_i,b_i$ have the same sign for at least one of $i=1,2$, which is equivalent to $\sign(a_i)-\sign(b_i)=0$, i.e. $\lim\limits_{d\to 0}\lk(L_1,L_2)=0$ from the product above.
Hence formula~(\ref{theorem:lk_arctan}) is continuous under $d\to 0$ for any non-crossing segments.
Any segments that intersect in the plane $\{z=0\}$ when $d=0$ have endpoint coordinates $a_i<0<b_i$ for both $i=1,2$ and have the limit
$\lim\limits_{d\to 0}\lk(L_1,L_2)=\dfrac{\sign(d)}{8}(-1-1)(1-(-1))=-\dfrac{\sign(d)}{2}$ as required. 
\bs
\end{proof}

\begin{crl}[lk for $d\to\pm\infty$]
\label{cor:lk_d->infty}
If the distance $d\to\pm\infty$, then $\lk(L_1,L_2)\to 0$.
\end{crl}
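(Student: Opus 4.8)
The plan is to start from the closed-form expression in~\Cref{theorem:lk_arctan} and take the limit of each of the four $\at$ terms separately as $d\to\pm\infty$, keeping the remaining invariants $a_1,b_1,a_2,b_2,\al$ fixed. The key observation is that the limit of $\at(a,b;d,\al)$ turns out to be independent of $a,b$, which forces a cancellation in the numerator.

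First I would dispose of the degenerate cases. If $\al=0$ or $\al=\pi$, then by definition $\at(a,b;d,\al)=\sign(d)\dfrac{\pi}{2}$, so the numerator of~(\ref{theorem:lk_arctan}) already vanishes for every $d$ and there is nothing to prove. For $\al\in(0,\pi)$, so that $\cot\al$ is finite and $\sin\al>0$, I would examine the argument of the arctangent,
$$\dfrac{ab\sin\al+d^2\cot\al}{d\sqrt{a^2+b^2-2ab\cos\al+d^2}}.$$
As $d\to\pm\infty$ with $a,b,\al$ fixed, the numerator is dominated by $d^2\cot\al$, while $\sqrt{a^2+b^2-2ab\cos\al+d^2}=|d|\sqrt{1+O(d^{-2})}$, so the denominator behaves like $d\cdot|d|=\sign(d)\,d^2$. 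Hence the argument tends to $\sign(d)\cot\al$, giving
$$\lim_{d\to\pm\infty}\at(a,b;d,\al)=\arctan(\sign(d)\cot\al)=\sign(d)\Big(\dfrac{\pi}{2}-\al\Big),$$
which is exactly the boundary value already recorded for $\at(0,0;d,\al)$ in the discussion following~\Cref{theorem:lk_arctan}.

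The decisive point is that this limit does not depend on $a$ or $b$. Therefore all four terms in the numerator of~(\ref{theorem:lk_arctan}) converge to the common value $\sign(d)(\frac{\pi}{2}-\al)$, and since they enter with the sign pattern $+\,+\,-\,-$, their limiting sum is $\sign(d)(\frac{\pi}{2}-\al)(1+1-1-1)=0$. Dividing by $4\pi$ yields $\lim_{d\to\pm\infty}\lk(L_1,L_2)=0$, as claimed.

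I do not anticipate a genuine obstacle here: the only care needed is the asymptotic expansion of the square root and tracking the sign of $d$ through the denominator, so that the $+\,+\,-\,-$ cancellation is correctly identified. As an alternative that avoids the arctangent asymptotics altogether, one could argue from the double integral in~\Cref{lemma:lk=integral_pq}: as $d\to\pm\infty$ the limits of integration $a_i/d,b_i/d$ all collapse to $0$, so the integral of a locally bounded integrand over a rectangle of area $O(d^{-2})$ tends to $0$, giving the same conclusion.
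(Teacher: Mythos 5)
Your proposal is correct and follows essentially the same route as the paper's proof: computing $\lim_{d\to\pm\infty}\at(a,b;d,\al)=\arctan(\sign(d)\cot\al)=\sign(d)(\frac{\pi}{2}-\al)$, observing that this limit is independent of $a,b$, and concluding that the four terms cancel under the $+\,+\,-\,-$ sign pattern. Your extra handling of the degenerate cases $\al=0,\pi$ and the alternative argument via the shrinking integration rectangle in the double integral are sound additions but not needed beyond what the paper does.
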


\begin{proof}[\Cref{cor:lk_d->infty}]
If $d\to\pm\infty$, while other parameters of $L_1,L_2$ remain fixed, then the function $\at(a,b;d,\al)=\arctan\left(\dfrac{ab\sin\al+d^2\cot\al}{d\sqrt{a^2+b^2-2ab\cos\al+d^2}}\right)$ from~\Cref{theorem:lk_arctan}
has the limit $\arctan(\sign(d)\cot\al)=\sign(d)\left(\dfrac{\pi}{2}-\al\right)$.
Since the four AT functions in~\Cref{theorem:lk_arctan} include the same $d,\al$, their limits cancel, so $\lk(L_1,L_2)\to 0$.
\bs
\end{proof}

\begin{crl}[lk for $a_i,b_i\to\infty$]
\label{cor:lk_ab->infty}
If the invariants $d,\al$ of line segments $L_1,L_2\subset\R^3$ remain fixed, but $a_i\to+\infty$ or $b_i\to-\infty$ for each $i=1,2$, then $\lk(L_1,L_2)\to 0$.
\end{crl}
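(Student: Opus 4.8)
The plan is to read off the limit directly from the explicit arctangent formula~(\ref{theorem:lk_arctan}) and to show that each of its four $\at$ terms tends to the \emph{same} value $\pm\frac{\pi}{2}$, so that the signed combination with coefficients $(+1,+1,-1,-1)$ collapses to $0$, exactly as in the proof of \Cref{cor:lk_d->infty}. First I would dispose of the degenerate cases: if $\al\in\{0,\pi\}$ or $d=0$, then \Cref{theorem:lk_arctan} makes $\lk(L_1,L_2)=0$ identically, so the limit is trivially $0$. Hence I may assume $\al\in(0,\pi)$, giving $\sin\al>0$, and $d\neq 0$.

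The key structural observation is that the hypothesis forces both endpoints of each receding segment to run off to infinity \emph{with a common sign}. Since $b_i=a_i+l_i$ with $l_i>0$, the condition $a_i\to+\infty$ already forces $b_i\to+\infty$, while $b_i\to-\infty$ forces $a_i\to-\infty$. Thus for each $i$ there is a direction $\sigma_i\in\{+1,-1\}$ with $a_i,b_i\to\sigma_i\infty$ and $|a_i|,|b_i|\to\infty$. Consequently the four products occurring in the numerators of the four $\at$ terms, namely $a_1b_2$, $b_1a_2$, $a_1a_2$, $b_1b_2$, all eventually carry the single common sign $\sigma_1\sigma_2$.

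Next I would analyse one factor $\at(a,b;d,\al)$ as $|a|,|b|\to\infty$ with $\sign(ab)=\sigma_1\sigma_2$ fixed. In its argument the numerator $ab\sin\al+d^2\cot\al$ is dominated by $ab\sin\al$, since $d^2\cot\al$ is a fixed constant, while the denominator $d\sqrt{a^2+b^2-2ab\cos\al+d^2}$ keeps the fixed sign $\sign(d)$. To confirm that the argument actually diverges in modulus, I would use $a^2+b^2-2ab\cos\al+d^2\le(|a|+|b|)^2+d^2$, so the modulus of the argument is bounded below by a constant multiple of $|ab|/(|a|+|b|)=(|a|^{-1}+|b|^{-1})^{-1}\to\infty$. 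Hence the argument tends to $\sigma_1\sigma_2\sign(d)\cdot\infty$ and $\at(a,b;d,\al)\to\sigma_1\sigma_2\sign(d)\frac{\pi}{2}$, a value independent of which of the four endpoint pairs was chosen.

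Finally, inserting this common limit into~(\ref{theorem:lk_arctan}) gives
$$\lim\lk(L_1,L_2)=\frac{\sigma_1\sigma_2\sign(d)}{4\pi}\Big(\tfrac{\pi}{2}+\tfrac{\pi}{2}-\tfrac{\pi}{2}-\tfrac{\pi}{2}\Big)=0.$$
The only delicate point — and the closest thing to an obstacle — is ensuring that all four arguments share one sign, so that the four half-$\pi$ limits cancel rather than add; this is exactly what the common-sign observation on $a_i,b_i$ provides, and it explains why the hypothesis reads ``$a_i\to+\infty$ \emph{or} $b_i\to-\infty$'' instead of permitting the two endpoints of a single segment to escape in opposite directions.
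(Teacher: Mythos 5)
Your proposal is correct and follows essentially the same route as the paper: bound the square root in the denominator by a multiple of $|a|+|b|$ so the argument of each $\arctan$ diverges in modulus with a common sign, conclude that all four $\at$ terms share the limit $\pm\frac{\pi}{2}$, and let the $(+1,+1,-1,-1)$ combination cancel. If anything, your version is slightly more careful than the paper's, since tracking the common sign $\sigma_1\sigma_2$ explicitly covers the mixed case where one segment recedes to $+\infty$ and the other to $-\infty$, and working with moduli avoids the sign subtlety for $d<0$ in the paper's monotonicity step.
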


\begin{proof}[\Cref{cor:lk_ab->infty}]
If $a_i\to+\infty$, then $a_i\leq b_i\to+\infty$, $i=1,2$.
If $b_i\to-\infty$, then $b_i\geq a_i\to-\infty$, $i=1,2$.
Consider the former case $a_i\to+\infty$, the latter is similar.
\medskip

Since $d,\al$ are fixed, $a^2+b^2-2ab\cos\al+d^2\leq (a+b)^2+d^2\leq 5b^2$ for large enough $b$.
Since $\arctan(x)$ increases, $\at(a,b;d,\al)\geq\arctan\left( \dfrac{ab\sin\al+d^2\cot\al}{db\sqrt{5}}\right)\to\sign(d)\dfrac{\pi}{2}$ as $b\geq a\to+\infty$.
Since the four AT functions in~\Cref{theorem:lk_arctan} have the same limit when their first two arguments tend to $+\infty$, these 4 limits cancel, so $\lk(L_1,L_2)\to 0$.
\bs
\end{proof}

\begin{crl}[lk for $a_i\to b_i$]
\label{cor:lk_l->0}
If one of segments $L_1,L_2\subset\R^3$ becomes infinitely short so that its final endpoint tends to the fixed initial endpoint (or vice versa), while all other invariants of $L_1,L_2$ from~\Cref{definition:segments_parameters} remain fixed, then $\lk(L_1,L_2)\to 0$. 
\end{crl}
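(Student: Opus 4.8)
The plan is to read off the limit directly from invariant-based formula (\ref{theorem:lk_arctan}), using nothing more than the continuity of the $\at$ function in its first two arguments. In the notation of \Cref{definition:segments_parameters}, the hypothesis that one segment shrinks to its initial endpoint means $l_i=b_i-a_i\to 0$, i.e. $b_i\to a_i$ for one of $i=1,2$ (and $a_i\to b_i$ in the ``vice versa'' case), while $d$, $\al$, and both endpoint coordinates of the other segment stay fixed.

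First I would dispose of the degenerate cases: if $d=0$ or $\al\in\{0,\pi\}$, then $\lk(L_1,L_2)=0$ identically by \Cref{theorem:lk_arctan} (see also \Cref{cor:lk_sign}), so the claimed limit is immediate. Assume henceforth that $d\neq 0$ and $\al\in(0,\pi)$. Next I would check that each term $\at(a,b;d,\al)$ is a continuous function of $(a,b)$: the only way continuity could fail is through a vanishing denominator $d\sqrt{a^2+b^2-2ab\cos\al+d^2}$, but since $a^2+b^2-2ab\cos\al\geq 0$ (it is the squared third side of a triangle with sides $a,b$ and included angle $\al$, as noted just after \Cref{theorem:lk_arctan}) and $d^2>0$, the radicand is bounded below by $d^2>0$, so the denominator never vanishes and $\at$ is jointly continuous in $a,b$.

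Finally, assuming without loss of generality that $L_1$ shrinks so that $b_1\to a_1$, I would pass to the limit inside (\ref{theorem:lk_arctan}). The two terms $\at(a_1,b_2;d,\al)$ and $\at(a_1,a_2;d,\al)$ do not involve $b_1$ and are therefore unchanged, while by continuity $\at(b_1,a_2;d,\al)\to\at(a_1,a_2;d,\al)$ and $\at(b_1,b_2;d,\al)\to\at(a_1,b_2;d,\al)$. The numerator of (\ref{theorem:lk_arctan}) thus tends to
$$\at(a_1,b_2;d,\al)+\at(a_1,a_2;d,\al)-\at(a_1,a_2;d,\al)-\at(a_1,b_2;d,\al)=0,$$
so $\lk(L_1,L_2)\to 0$. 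The remaining cases ($L_2$ shrinking, or a final endpoint held fixed while the initial one moves) reduce to the same pairwise cancellation of the four-term $\at$ sum. I do not expect any serious obstacle here: once the denominator is seen to stay bounded away from zero, the result is simply a telescoping of the four arctangents, and the only mild care needed is the separate treatment of the degenerate parameter values $d=0$ and $\al\in\{0,\pi\}$.
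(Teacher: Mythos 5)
Your proof is correct and follows essentially the same route as the paper's: both rely on the continuity of $\at(a,b;d,\al)$ in its first two arguments (for $d\neq 0$) and the resulting pairwise cancellation of the four arctangent terms in formula (\ref{theorem:lk_arctan}). You are slightly more explicit than the paper in justifying continuity (bounding the radicand below by $d^2$) and in handling the degenerate values $\al\in\{0,\pi\}$, but the argument is the same.
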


\begin{proof}[\Cref{cor:lk_l->0}] $\lk(L_1,L_2)=0$ for $d=0$.
It's enough to consider the case $d\neq 0$.
Then $\at(a,b;d,\al)=\arctan\left(\dfrac{ab\sin\al+d^2\cot\al}{d\sqrt{a^2+b^2-2ab\cos\al+d^2}}\right)$ 
is continuous.  
Let (say for $i=1$) $a_1\to b_1$, the case $b_1\to a_1$ is similar.
The continuity of $\at$ implies that $\at(a_1,b_2;d,\al)\to\at(b_1,b_2;d,\al)$ and $\at(a_1,a_2;d,\al)\to\at(b_1,a_2;d,\al)$.
In the limit all terms in~\Cref{theorem:lk_arctan} cancel, hence $\lk(L_1,L_2)\to 0$.
\bs
\end{proof}

\section{Example computations of the linking number and a discussion}
\label{sec:computations}

If curves $\ga_1,\ga_2\subset\R^3$ consist of straight line segments, then $\lk(\ga_1,\ga_2)$ can be computed as the sum of $\lk(L_1,L_2)$ over all line segments $L_1\subset\ga_1$ and $L_2\subset\ga_2$.

\sepfigure
\begin{figure}[h]
\centering
\includegraphics[height=23mm]{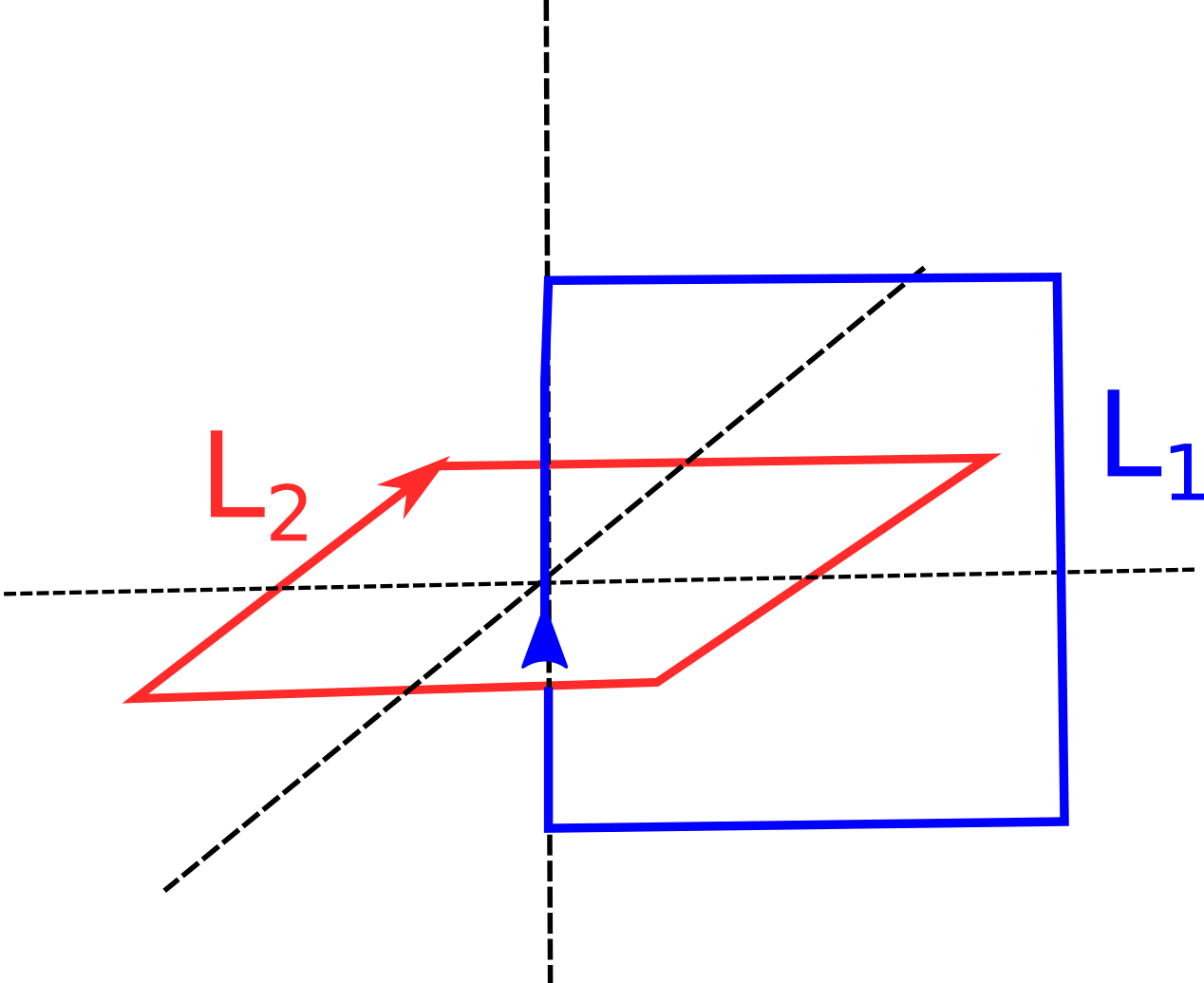}
\hspace*{0mm}
\includegraphics[height=23mm]{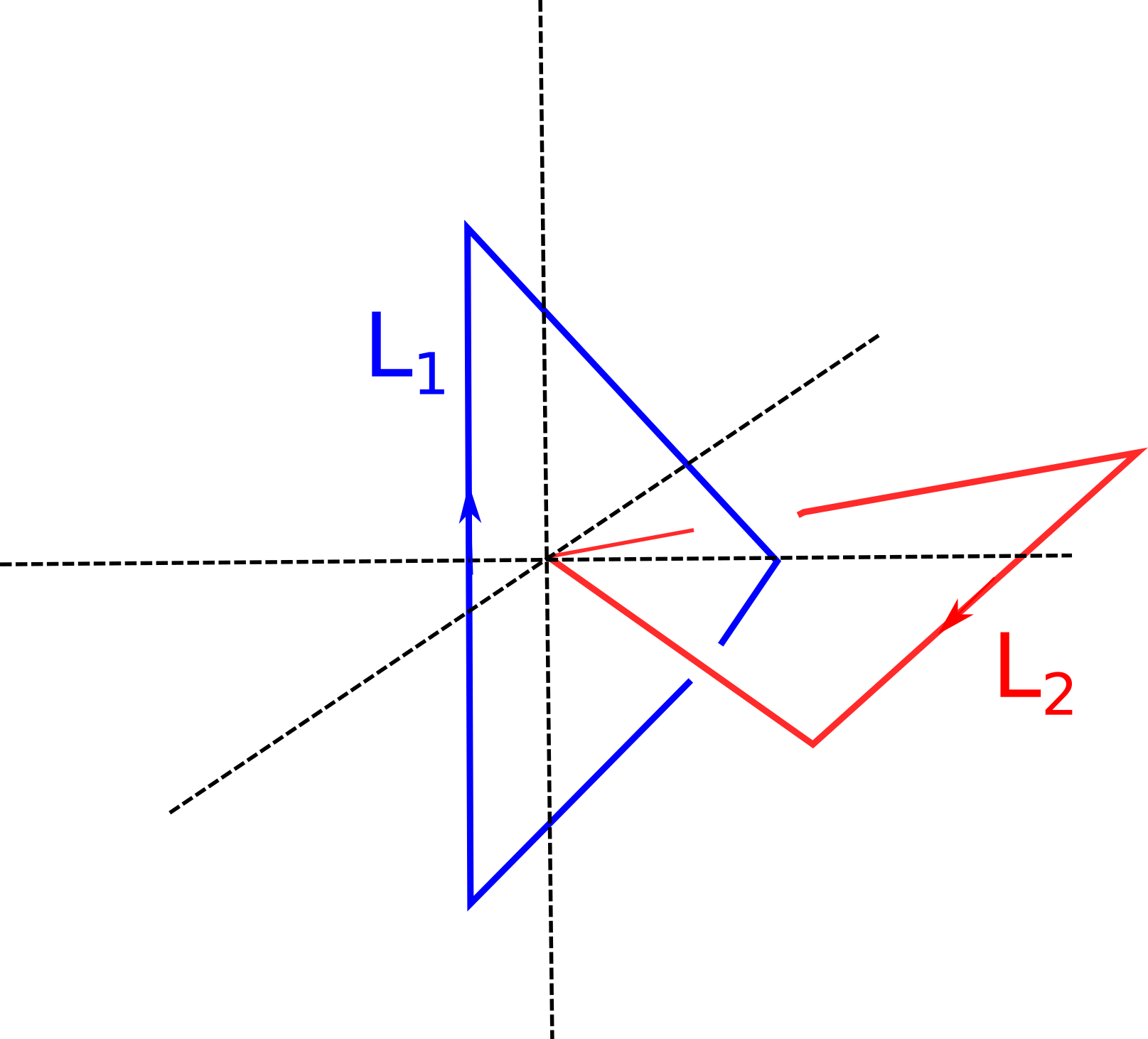}
\includegraphics[height=22mm]{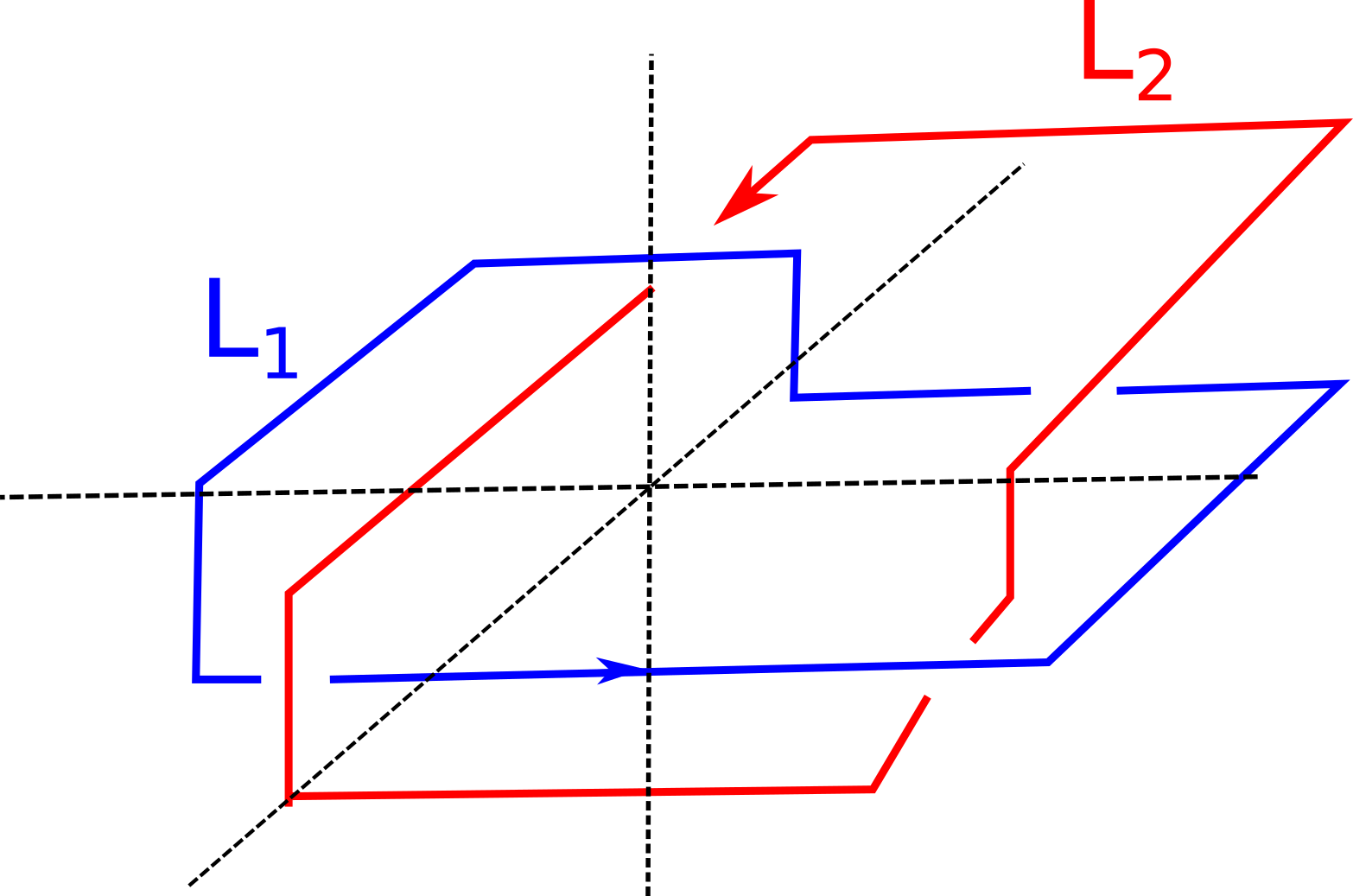}
\hspace*{0mm}
\includegraphics[height=22mm]{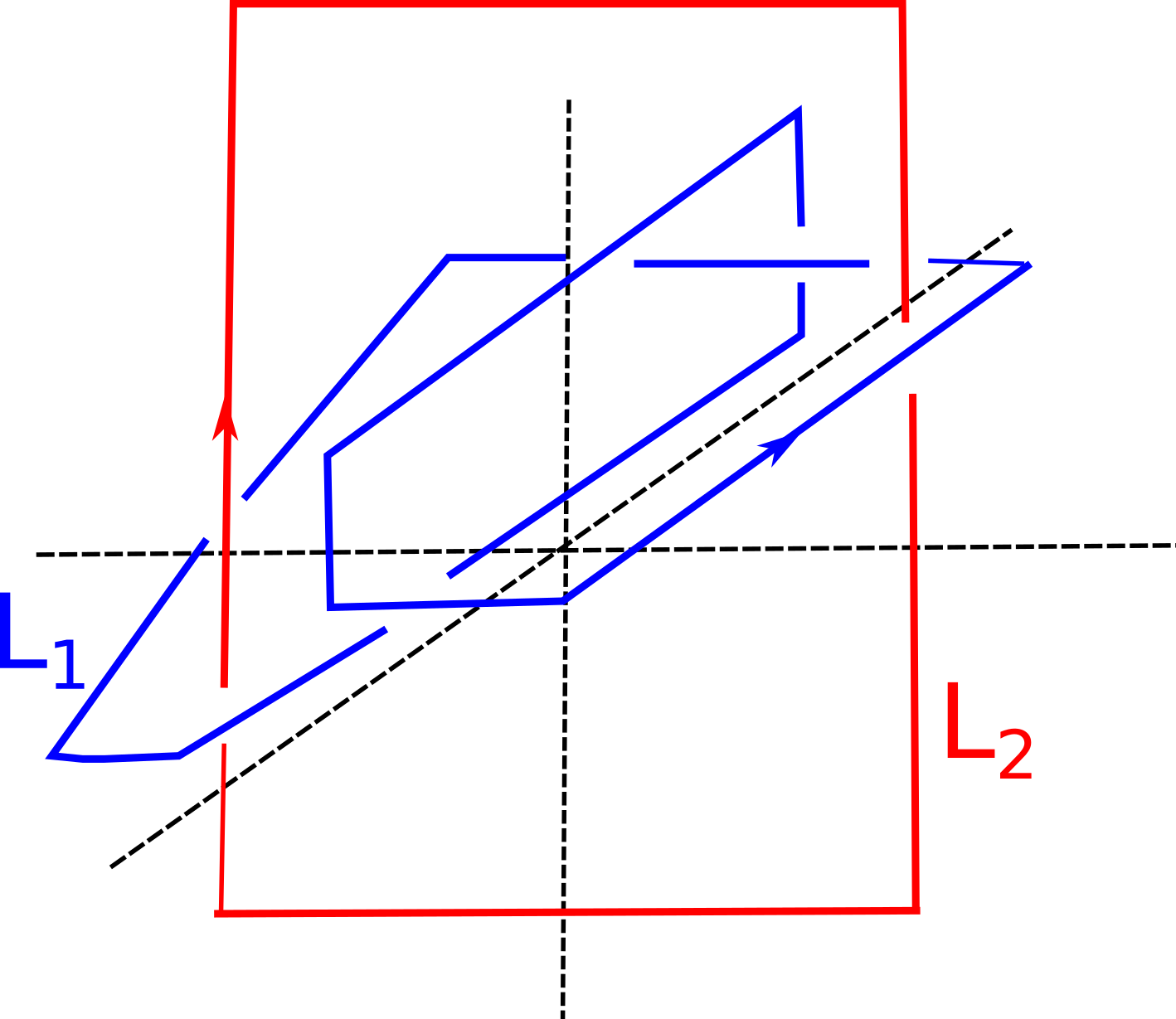}
\caption{\textbf{1st}: The Hopf link as two square cycles has $\lk=-1$ and vertices with coordinates $L_1 = (-2,0,2),(2,0,-2),(2,0,2),(-2,0,2)$, 
$L_2 = (-1,-2,0),(-1,2,0),(1,2,0),(1,-2,0)$
\textbf{2nd}: The Hopf link of triangular cycles has $\lk=+1$, $L_1 = (-1,0,-1),(-1,0,1),(1,0,0)$ and $L_2 = (0,0,0),(2,1,0),(2,-1,0)$.
\textbf{3rd}: Solomon's link has $\lk=+2$, $L_1 = (-1,1,1),(-1,-1,1),(3,-1,1),(3,1,-1),(1,1,-1),(1,1,1)$ and $L_2 = (-1,-2,0),(-1,2,0),(1,2,0),(1,-2,0)$.
\textbf{4th}: Whitehead's link has $\lk=0$, $L_1=$
$(-3,-2,-1),(0,-2,-1),(0,2,1),(0,0,1),(0,0,0),(3,0,0),(3,1,0),(-3,1,0),(-3,1,-1)$ and
$L_2 = (-1,0,-3),(-1,0,3),(1,0,3),(-1,0,3)$.}
\label{fig:links}
\end{figure}
\sepfigure

\Cref{fig:links} shows polygonal links whose linking numbers were computed by our Python code implementing formula~(\ref{theorem:lk_arctan}) at \url{https://github.com/MattB-242/Closed_Lk_Form}.
For all links in  in Fig~\ref{fig:links} formula~\ref{theorem:lk_arctan} calculates the linking number between the two components correctly (as equal to $-1$ and $+1$ respectively in the orientations given in Fig~\ref{fig:links}), with a computation error of less than $10^{-12}$. 
\medskip

The asymptotic linking number introduced by Arnold
converges for infinitely long curves \cite{vogel2003asymptotic}, while
our initial motivation was a computation of geometric and topology invariants to classify periodic structures such as textiles \cite{bright2020encoding} and crystals \cite{cui2019mining}. 
\medskip

\Cref{theorem:lk_arctan} allows us to compute the \emph{periodic} linking number between a segment $J$ and a growing finite lattice $L_n$ whose unit cell consists of $n$ copies of two oppositely oriented segments orthogonal to $J$.
This periodic linking number is computed for increasing $n$ in a lattice extending periodically in one, two and three directions, see~\Cref{fig:perilink}. 
As $n$ increases, the $\lk$ function asymptotically approaches an approximate value of $0.30$ for 1- and 3-periodic lattice and $0.29$ for the 2-periodic lattice. 
\medskip

The invariant-based formula has allowed us to prove new asymptotic results of the linking number in Corollaries \ref{cor:lk_bounds}-\ref{cor:lk_l->0} of section~\ref{sec:behaviour}. 
Since the periodic linking number is a real-valued invariant modulo isometries, it can be used to continuously quantify similarities between periodic crystalline networks \cite{cui2019mining}. 
One next possible step is to use formula (\ref{theorem:lk_arctan}) to prove asymptotic convergence of the periodic linking number for arbitrary lattices, so that we can show that the limit of the infinite sum is a general invariant that can be used to develop descriptors of crystal structures.
\medskip

The Milnor invariants generalise the linking number to invariants of links with more than two components. 
An integral for the three component Milnor invariant~\cite{deturck2011} may be possible to compute in a closed form similarly to Theorem~\ref{theorem:lk_arctan}. 
The interesting open problem is to extend the isometry-based approach to finer invariants of knots.
\medskip

The Gauss integral in (\ref{definition:Gauss_integral}) was extended to the infinite Kontsevich integral containing all finite-type or Vassiliev's invariants of knots~\cite{kontsevich1993}.
The coefficients of this infinite series were explicitly described \cite{kurlin2005compressed} as solutions of exponential equations with non-commutative variables $x,y$ in a compressed form modulo commutators of commutators in $x,y$.
The underlying metabelian formula for $\ln(e^x e^y)$ has found an easier proof \cite{kurlin2007baker} in the form of a generating series in the variables $x,y$.
\medskip

In conclusion, we have proved the analytic formula for the linking number based on 6 isometry invariants that uniquely determine a relative position of two line segments in $\R^3$.
Though a similar formula was claimed in \cite{klenin2000computation}, no proof was given.
Hence this paper fills an important gap in the literature by completing the proof via 3 non-trivial lemmas in section~\ref{sec:lk_formula}, see detailed computations in the arxiv version of this paper.

\sepfigure
\begin{figure}[h!]
\centering
\includegraphics[height=55mm]{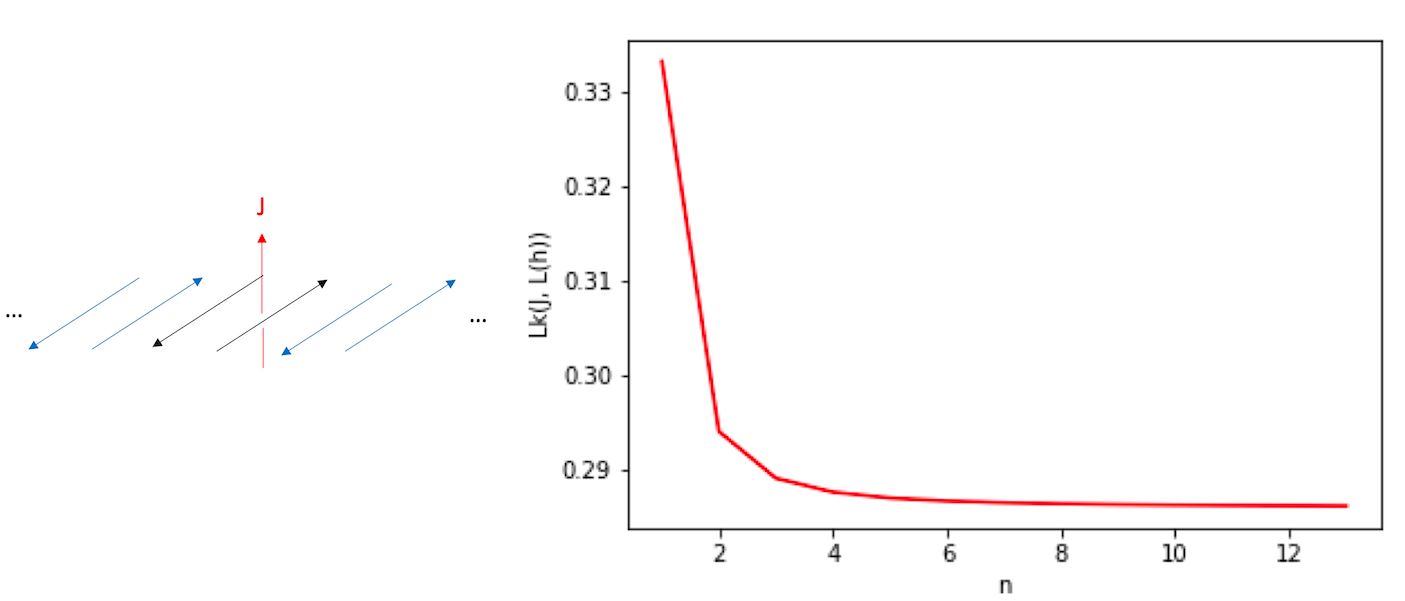}
\hspace*{0mm}
\includegraphics[height=53mm]{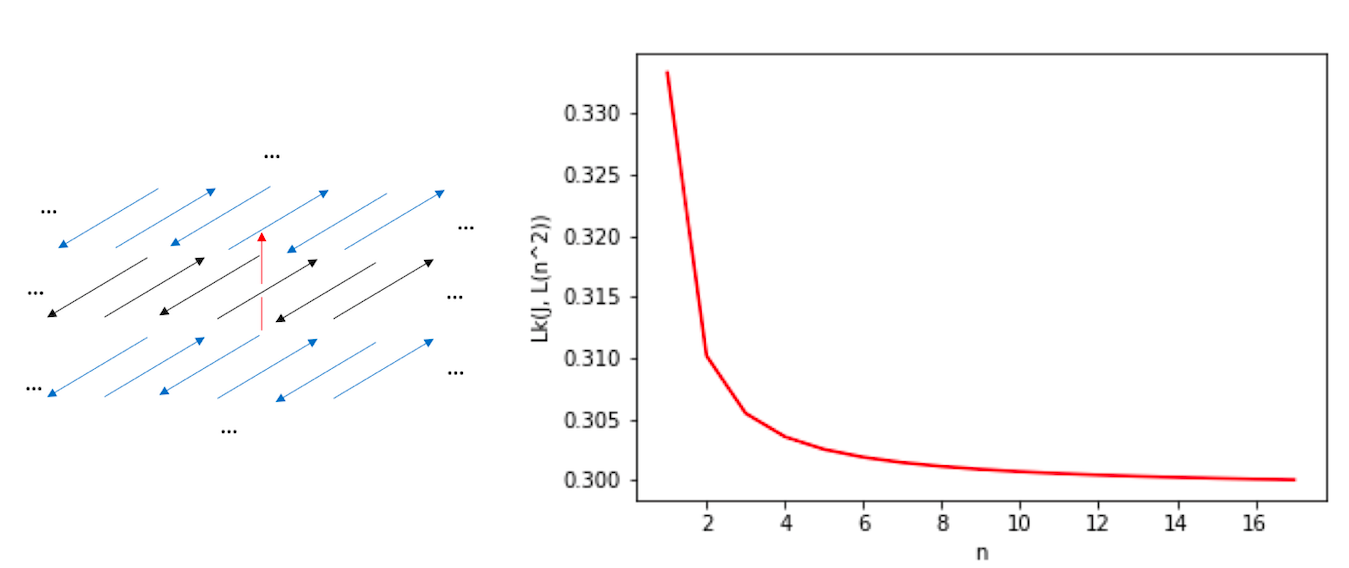}
\hspace*{0mm}
\includegraphics[height=54mm]{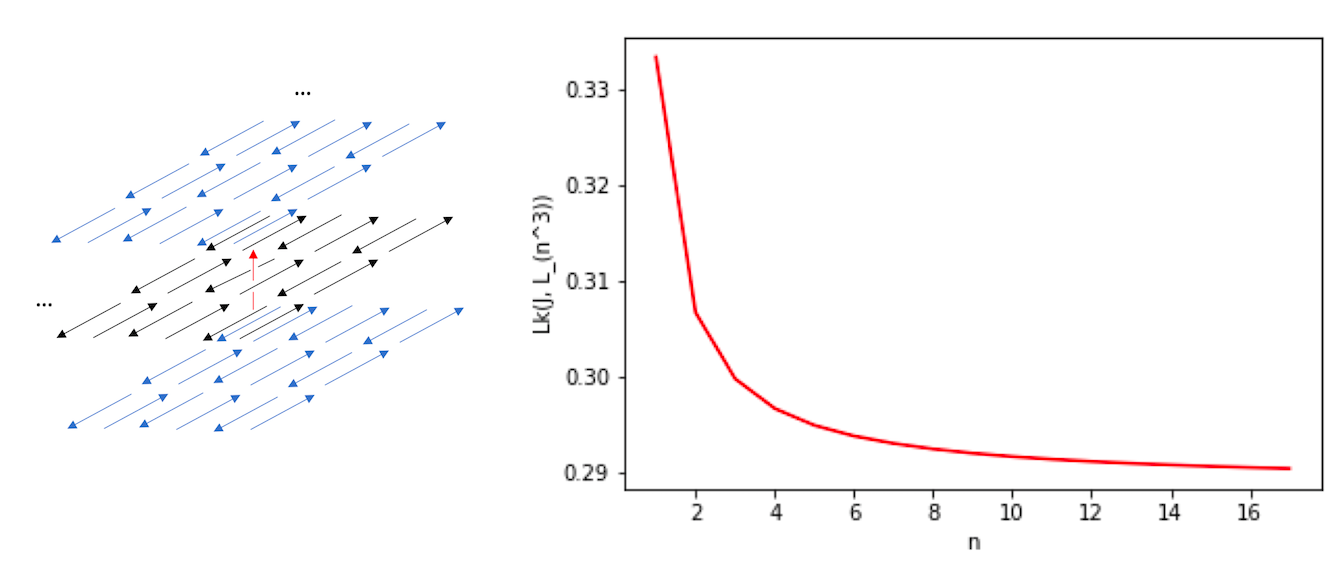}
\caption{\textbf{Left}: the line segment $J = (0,0,-1) + t(0,0,2)$ in red and the periodic lattice $L(n^k)$ derived from $n$  copies of the 'unit cell' $L = \{(-1,-1,0) + t(0,2,0), (-1,1,0) + s(0,-2,0)\}$, $t,s\in[0,1]$, translated in $k$ linearly independent directions for increasing $n\in\Z$.
\textbf{Right}: the periodic linking number $\lk(J,L(n^k))$ is converging fast for $n\to+\infty$.  
\textbf{Top}: $k= 1$. 
\textbf{Middle}: $k=2$. 
\textbf{Bottom}: $k=3$.
}
\label{fig:perilink}
\end{figure}
\sepfigure

\section{Acknowledgements}
This work was supported by the UK Engineering and Physical Sciences Research Council under the grant  ``Application-driven Topological Data Analysis'', EP/R018472/1.
We thank the organisers of NUMGRID2020 for the opportunity to present the results.

\bibliographystyle{plainurl}
\bibliography{linking-number}

\begin{thebibliography}{10}

\bibitem{ahmad2020characterization}
Rasool Ahmad, Saptarshi Paul, and Sumit Basu.
\newblock Characterization of entanglements in glassy polymeric ensembles using
  the gaussian linking number.
\newblock {\em Phys. Rev. E}, 101(2):022503, 2020.

\bibitem{arai2013rigorous}
Zin Arai.
\newblock A rigorous numerical algorithm for computing the linking number of
  links.
\newblock {\em Nonlinear Theory and Its Applications}, 4(1):104--110, 2013.

\bibitem{banchoff1976self}
T~Banchoff.
\newblock Self-linking numbers of space polygons.
\newblock {\em Indiana U. Math. J}, 25:1171--1188, 1976.

\bibitem{bertolazzi2019efficient}
Enrico Bertolazzi, Riccardo Ghiloni, and Ruben Specogna.
\newblock Efficient computation of linking number with certification.
\newblock {\em arXiv:1912.13121}, 2019.

\bibitem{bright2020encoding}
M~Bright and V~Kurlin.
\newblock Encoding and topological computation on textile structures.
\newblock {\em Computers and Graphics}, 90:51--61, 2020.

\bibitem{bright2020proof}
Matt Bright, Vitaliy Kurlin, and Olga Anosova.
\newblock A proof of the invariant based formula for the linking number and its
  extremal behaviour.
\newblock {\em arXiv:2011.04631}, 2020.

\bibitem{cui2019mining}
P~Cui, D~McMahon, P~Spackman, B~Alston, M~Little, G~Day, and A~Cooper.
\newblock Mining predicted crystal structure landscapes with high throughput
  crystallisation: old molecules, new insights.
\newblock {\em Chemical Science}, 10:9988--9997, 2019.

\bibitem{deturck2011}
D.~DeTurck, H.~Gluck, R.~Komendarczyk, P.~Melvin, C.~Shonkwiler, and
  D.~Vela-Vick.
\newblock Pontryagin invariants and integral formulas for milnor's triple
  linking number.
\newblock {\em arXiv:1101.3374}.

\bibitem{gauss1833integral}
Carl~Friedrich Gauss.
\newblock Integral formula for linking number.
\newblock {\em Zur mathematischen theorie der electrodynamische wirkungen,
  Collected Works}, 5:605, 1833.

\bibitem{klenin2000computation}
Konstantin Klenin and J{\"o}rg Langowski.
\newblock Computation of writhe in modeling of supercoiled dna.
\newblock {\em Biopolymers: Original Research on Biomolecules}, 54(5):307--317,
  2000.

\bibitem{kontsevich1993}
M.~Kontsevich.
\newblock Vassiliev's knot invariants.
\newblock {\em Adv. Soviet Mathematics}, 16:137 -- 150, 1993.

\bibitem{kurlin2005compressed}
V~Kurlin.
\newblock Compressed {D}rinfeld associators.
\newblock {\em Journal of Algebra}, 292:184--242, 2005.

\bibitem{kurlin2007baker}
V~Kurlin.
\newblock The {B}aker-{C}ampbell-{H}ausdorff formula in the free metabelian lie
  algebra.
\newblock {\em Journal of Lie Theory}, 17(3):525--538, 2007.

\bibitem{maxwell1873treatise}
James~Clerk Maxwell.
\newblock A treatise on electricity and magnetism.
\newblock I, 1873.

\bibitem{panagiotou2015linking}
E~Panagiotou.
\newblock The linking number in systems with periodic boundary conditions.
\newblock {\em J Computational Physics}, 300:533--573, 2015.

\bibitem{panagiotou2020knot}
Eleni Panagiotou and Louis~H Kauffman.
\newblock Knot polynomials of open and closed curves.
\newblock {\em Proceedings Royal Society A. 476: 20200124, arxiv:2001.01303},
  2020.

\bibitem{ricca2011gauss}
Renzo~L Ricca and Bernardo Nipoti.
\newblock Gauss' linking number revisited.
\newblock {\em Journal of Knot Theory and Its Ramifications},
  20(10):1325--1343, 2011.

\bibitem{vogel2003asymptotic}
T~Vogel.
\newblock On the asymptotic linking number.
\newblock {\em Proc.Amer. Math. Soc.}, 131:2289--2297, 2003.

\bibitem{Vologodskii1974}
A.V. Vologodskii, V.~V. Anshelevich, A.V. Lukashin, and M.~D.
  Frank-Kamenetskii.
\newblock Statistical mechanics of supercoils and the torsional stiffness of
  the dna double helix.
\newblock {\em Nature}, 280(5720):294--298, 1974.

\end{thebibliography}

\end{document}